\newtheorem{thm}{Theorem}[section]
\newtheorem{prop}[thm]{Proposition}
\newtheorem{defn}{Definition}
\theoremstyle{definition}
\newtheorem{rem}[thm]{Remark}
\theoremstyle{remark}
\newcommand{\ds}{\displaystyle}
\newcommand{\R}{\mathbb{R}}
\newcommand{\N}{\mathbb{N}}
\newcommand{\Rn}{\mathbb R^n}
\newcommand{\de}{\partial}
\DeclareMathOperator{\dive}{div}
\DeclareMathOperator{\Qp}{\mathcal Q_{p}}
\DeclareMathOperator{\esssup}{ess\,sup}
\DeclareMathOperator{\essinf}{ess\,inf}
\DeclareMathOperator{\spt}{spt}
\patchcmd{\abstract}{\scshape\abstractname}{\textbf{\abstractname}}{}{}
\def\@makefnmark{} 
\title{On the second Dirichlet eigenvalue \\ of some nonlinear anisotropic elliptic operators}
\author{Francesco Della Pietra, %
Nunzia Gavitone, %
Gianpaolo Piscitelli \\[.3cm]%
{\em\scriptsize Universit\`a degli studi di Napoli Federico II, Dipartimento di Matematica e Applicazioni ``R. Caccioppoli''} \\ 
{\em\scriptsize Via Cintia, Monte S. Angelo - 80126 Napoli, Italia.}\thanks{Email: f.dellapietra@unina.it, nunzia.gavitone@unina.it, gianpaolo.piscitelli@unina.it
}
}
\begin{document}

\maketitle

\begin{abstract}
Let $\Omega$ be a bounded open set of $\R^{n}$, $n\ge 2$. In this paper we mainly study some properties of the second Dirichlet eigenvalue $\lambda_{2}(p,\Omega)$ of the anisotropic $p$-Laplacian
\[
-\mathcal Q_{p}u:=-\dive \left(F^{p-1}(\nabla u)F_\xi (\nabla u)\right),
\]
where $F$ is a suitable smooth norm of $\R^{n}$ and $p\in]1,+\infty[$. We provide a lower bound of $\lambda_{2}(p,\Omega)$ among bounded open sets of given measure, showing the validity of a Hong-Krahn-Szego type inequality. 
Furthermore, we investigate the limit problem as $p\to+\infty$.
\\
 
\noindent MSC 2010: 35P15 - 35P30 - 35J60\\
\noindent Keywords: Nonlinear eigenvalue problems - Hong-Krahn-Szego inequality - Finsler metrics
\end{abstract}

\section{Introduction}

Let $\Omega$ be a bounded open set of $\R^{n}$, $n\ge 2$. The main aim of this paper is to study some properties of the Dirichlet eigenvalues of the anisotropic $p$-Laplacian operator:
\begin{equation}
\label{operat}
-\mathcal Q_{p}u:=-\dive \left(F^{p-1}(\nabla u)\nabla_{\xi}F (\nabla u)
\right),
\end{equation}
where $1<p<+\infty$, and $F$ is a sufficiently smooth norm on $\R^{n}$ (see Section \ref{anintro} for the precise assumptions on $F$), namely the values $\lambda$ such that the problem
\begin{equation}
\label{introaut}
\left\{
\begin{array}{ll}
-\mathcal Q_{p}u=\lambda |u|^{p-2}u  &\text{in}\ \Omega \\
u=0 &\text{on}\ \partial\Omega
\end{array}
\right.
\end{equation}
admits a nontrivial solution in $W_{0}^{1,p}(\Omega)$.

The operator in \eqref{operat} reduces to the $p$-Laplacian when $F$ is the Euclidean norm on $\R^{n}$. For a general norm $F$, $\mathcal Q_{p}$ is anisotropic and can be highly nonlinear. In literature, several papers are devoted to the study of the smallest eigenvalue of \eqref{introaut}, denoted by $\lambda_{1}(p,\Omega)$, in bounded domains (see Section \ref{anintro}),  which has the variational characterization
\[
\lambda_{1}(p,\Omega)=\min_{\varphi\in W_{0}^{1,p}(\Omega)\setminus\{0\}}\frac{\displaystyle\int_{\Omega}F^{p}(\nabla \varphi)\,dx}{\displaystyle\int_{\Omega}|\varphi|^{p}\,dx}.
\]
Let $\Omega$ be a bounded domain. It is known (see \cite{bfk,dpgpota}) that $\lambda_{1}(p,\Omega)$ is simple, the eigenfunctions have constant sign and it is isolated; moreover the only positive eigenfunctions are the first eigenfunctions. Furthermore, the Faber-Krahn inequality holds (see \cite{bfk}):
\[
\lambda_{1}(p,\Omega)\ge \lambda_{1}(p,\mathcal W_{R})
\]
where $\mathcal W_{R}$ is the so-called Wulff shape, that is the ball with respect to the dual norm $F^{o}$ of $F$, having the same measure of $\Omega$ (see Section \ref{anintro}). Many other results are known for $\lambda_{1}(p,\Omega)$. The interested reader may refer, for example, to \cite{bfk,bkj,bgm,dpgmana,kn08,p,wx2}. As matter of fact, also different kind of boundary conditions have been considered as, for example, in the papers \cite{dgans,dgp2} (Neumann case), \cite{dpgpota}  (Robin case).

Among the results contained in the quoted papers, we recall that if $\Omega$ is a bounded domain, it has been proved in \cite{bkj} that
\[
\lim_{p\to \infty}\lambda_{1}(p,\Omega)^{\frac1p}=\frac{1}{\rho_{F}(\Omega)},
\]
where $\rho_{F}(\Omega)$ is the anisotropic inradius of $\Omega$ with respect to the dual norm (see Section \ref{anintro} for its definition), generalizing a well-known result in the Euclidean case contained in \cite{jlm}.
 
Actually, very few results are known for higher eigenvalues in the anisotropic case. In \cite{thfr} the existence of a infinite  sequence of eigenvalues is proved, obtained by means of a $\min-\max$ characterization. Actually, as in the Euclidean case, it is not known if this sequence exhausts all the set of the eigenvalues. Here we will show that the spectrum of $-\mathcal Q_{p}$ is a closed set, that the eigenfunctions are in $C^{1,\alpha}(\Omega)$ and that admit a finite number of nodal domains. We recall the reference \cite{L2}, where many results for the spectrum of the $p$-Laplacian in the Euclidean case have been summarized.
\\

The core of the paper relies in the study of the second eigenvalue $\lambda_{2}(p,\Omega)$, $p\in]1,+\infty[$, in bounded open sets, 
defined as
\begin{equation*}
\lambda_2(p,\Omega):=\begin{cases}
\min\{\lambda > \lambda_1(p,\Omega)\colon \lambda\ \text{is an eigenvalue}\}& \text{if } \lambda_1(p,\Omega) \text{ is simple}\\
\lambda_1(p,\Omega) & \text{otherwise},
\end{cases}
\end{equation*}
and in analyzing its behavior when $p\to\infty$. 

First of all, we show that if $\Omega$ is a domain, then $\lambda_{2}(p,\Omega)$ admits exactly two nodal domains. Moreover, for a bounded open set $\Omega$, we prove a sharp lower bound for $\lambda_{2}$, namely the Hong-Krahn-Szego inequality
\begin{equation}
\label{HKS}
\lambda_2(p,\Omega)\ge  \lambda_2(p,\widetilde{\mathcal W}),
\end{equation}
where $\widetilde{\mathcal W}$ is the union of two disjoint Wulff shapes, each one of measure $\frac{|\Omega|}{2}$.

In the Euclidean case, such inequality is well-known for $p=2$, and it has been recently studied for any $1<p<+\infty$ in \cite{bf2}.

Finally, we address our attention to the behavior of $\lambda_{2}(p,\Omega)$ when $\Omega$ is a bounded open set and $p\to +\infty$. In particular, we show that
\[
\lim_{p\to \infty}\lambda_{2}(p,\Omega)^{\frac1p} = \frac{1}{\rho_{2,F}(\Omega)},
\]
where $\rho_{2,F}(\Omega)$ is the radius of two disjoint Wulff shapes $\mathcal W_{1},\mathcal W_{2}$ such that $\mathcal W_{1}\cup\mathcal W_{2}$ is contained in $\Omega$. Furthermore, the normalized eigenfunctions of $\lambda_{p}(2,\Omega)$ converge to a function $u_{\infty}$ that is a viscosity solution to a suitable fully nonlinear elliptic problem (see Section 5). In the Euclidean case, this kind of result has been proved for bounded domains in \cite{jl}. We consider both the nonconnected case and general norm $F$. In a forthcoming paper we will deal with the limit case $p \to 1$.

As enhanced before, the aim of the paper is twofold: first, to consider the case of a general Finsler norm $F$; second, to extend  also the results known in the case of domains, to the case of nonconnected sets. We structured the paper as follows. In Section $2$ we recall the main definitions as well as some basic fact of convex geometry, and we fix the precise assumptions on $F$. In Section $3$ we state the general eigenvalue problem for $-\mathcal Q_{p}$, recalling some known results, extending them, where it is possible, to the case of nonconnected sets; moreover we provide several properties of the first and of higher eigenvalues and eigenfunctions. In Section $4$ the attention will be focused on the second eigenvalue $\lambda_{2}(p,\Omega)$ of $-\mathcal Q_{p}$ proving, among the other properties of $\lambda_{2}$, the Hong-Krahn-Szego inequality. Finally, in Section $5$ we study the limit case $p\to \infty$. 

\section{Notation and preliminaries}
\label{anintro}
Throughout the paper we will consider a convex even 1-homogeneous function 
\[
\xi\in \R^{n}\mapsto F(\xi)\in [0,+\infty[,
\] 
that is a convex function such that
\begin{equation}
\label{eq:omo}
F(t\xi)=|t|F(\xi), \quad t\in \R,\,\xi \in \R^{n}, 
\end{equation}
 and such that
\begin{equation}
\label{eq:lin}
a|\xi| \le F(\xi),\quad \xi \in \R^{n},
\end{equation}
for some constant $0<a$. 
Under this hypothesis it is easy to see that there exists $b\ge a$ such that
\[
F(\xi)\le b |\xi|,\quad \xi \in \R^{n}.
\]
Moreover, throughout the paper we will assume that 
\begin{equation}
\label{strong}
\nabla^{2}_{\xi}[F^{p}](\xi)\text{ is positive definite in }\R^{n}\setminus\{0\},
\end{equation}
with $1<p<+\infty$. 

The hypothesis \eqref{strong} on $F$ assures that the operator 
\[
\Qp[u]:= \dive \left(\frac{1}{p}\nabla_{\xi}[F^{p}](\nabla u)\right)
\] 
is elliptic, hence there exists a positive constant $\gamma$ such that
\begin{equation*}
\frac1p\sum_{i,j=1}^{n}{\nabla^{2}_{\xi_{i}\xi_{j}}[F^{p}](\eta)
  \xi_i\xi_j}\ge
\gamma |\eta|^{p-2} |\xi|^2, 
\end{equation*}
for some positive constant $\gamma$, for any $\eta \in
\Rn\setminus\{0\}$ and for any $\xi\in \Rn$. 
\begin{rem}
We stress that for $p\ge 2$ the condition  
\begin{equation*}
\nabla^{2}_{\xi}[F^{2}](\xi)\text{ is positive definite in }\R^{n}\setminus\{0\},
\end{equation*}
implies \eqref{strong}.
\end{rem}
The polar function $F^o\colon\R^n \rightarrow [0,+\infty[$ 
of $F$ is defined as
\begin{equation*}
F^o(v)=\sup_{\xi \ne 0} \frac{\langle \xi, v\rangle}{F(\xi)}. 
\end{equation*}
 It is easy to verify that also $F^o$ is a convex function
which satisfies properties \eqref{eq:omo} and
\eqref{eq:lin}. Furthermore, 
\begin{equation*}
F(v)=\sup_{\xi \ne 0} \frac{\langle \xi, v\rangle}{F^o(\xi)}.
\end{equation*}
From the above property it holds that
\begin{equation*}
|\langle \xi, \eta\rangle| \le F(\xi) F^{o}(\eta), \qquad \forall \xi, \eta \in \R^{n}.
\end{equation*}
The set
\[
\mathcal W = \{  \xi \in \R^n \colon F^o(\xi)< 1 \}
\]
is the so-called Wulff shape centered at the origin. We put
$\kappa_n=|\mathcal W|$, where $|\mathcal W|$ denotes the Lebesgue measure
of $\mathcal W$. More generally, we denote with $\mathcal W_r(x_0)$
the set $r\mathcal W+x_0$, that is the Wulff shape centered at $x_0$
with measure $\kappa_nr^n$, and $\mathcal W_r(0)=\mathcal W_r$.

The following properties of $F$ and $F^o$ hold true
(see for example \cite{bp}):
\begin{gather*}
 \langle\nabla_{\xi}F(\xi) , \xi \rangle= F(\xi), \quad  \langle\nabla_{\xi}F^{o} (\xi), \xi \rangle
= F^{o}(\xi),
 \\
 F( \nabla_{\xi}F^o(\xi))=F^o( \nabla_{\xi}F(\xi))=1,\quad \forall \xi \in
\R^n\setminus \{0\}, 
\\
F^o(\xi)  \nabla_{\xi}F(\nabla_{\xi}F^o(\xi) ) = F(\xi) 
\nabla_{\xi}F^o( \nabla_{\xi}F(\xi) ) = \xi\qquad \forall \xi \in
\R^n\setminus \{0\}. 
\end{gather*}

Finally, we will recall the following
\begin{defn}
A domain of $\mathbb R^{n}$ is a connected open set.
\end{defn}

\section{The Dirichlet eigenvalue problem for \texorpdfstring{$-\mathcal Q_{p}$}{TEXT}}
Here we state the eigenvalue problem for $\Qp$. Let $\Omega$ be a bounded open set in $\R^{n}$, $n\ge 2$, $1<p<+\infty$, and consider the problem
\begin{equation}
\label{eigpb}
\left\{
\begin{array}{ll}
-\Qp u=\lambda |u|^{p-2}u & \text{in } \Omega \\
u=0 &\text{on } \partial\Omega.
\end{array}
\right.
\end{equation}

\begin{defn}
We say that $u \in W^{1,p}_0(\Omega)$, $u \ne 0$, is an eigenfunction of \eqref{eigpb}, if
 \begin{equation}
 \label{eig-def}
\int_{\Omega}\langle F^{p-1}(\nabla u) \nabla_{\xi}F(\nabla u),\nabla\varphi \rangle\ dx= \lambda\int_\Omega |u|^{p-2} u \varphi \ dx
 \end{equation}
for all $\varphi \in W^{1,p}_0(\Omega)$. The corresponding real number $\lambda$ is called an eigenvalue of \eqref{eigpb}.
\end{defn}

Obviously, if $u$ is an eigenfunction associated to $\lambda$, then
\begin{equation*}
	\lambda=\frac{\ds\int_\Omega F^p(\nabla u)\ dx}{\ds\int_\Omega |u|^p\ dx}>0.
\end{equation*}

\subsection{{The first eigenvalue}}
Among the eigenvalues of \eqref{eigpb}, the smallest one, denoted here by $\lambda_{1}(p,\Omega)$, has the following well-known variational characterization:
\begin{equation}
\label{rayleigh}
\lambda_{1}(p,\Omega)=\min_{\varphi\in W^{1,p}_{0}(\Omega)\setminus\{0\}} \frac{\ds\int_\Omega F^p(\nabla \varphi)\ dx}{\ds\int_\Omega |\varphi|^p\ dx}.
\end{equation}
In the following theorems its main properties are recalled. 

\begin{thm}
\label{th_prop1}
 If $\Omega$ is a bounded open set in $\R^{n}$, $n\ge 2$, there exists a function $u_{1}\in C^{1,\alpha}(\Omega)\cap C(\overline{\Omega})$ which achieves the minimum in \eqref{rayleigh}, and satisfies the problem \eqref{eigpb} with $\lambda=\lambda_{1}(p,\Omega)$. Moreover, if $\Omega$ is connected, then $\lambda_{1}(p,\Omega)$ is simple, that is the corresponding eigenfunctions are unique up to a multiplicative constant, and the first eigenfunctions have constant sign in $\Omega$. 
 \end{thm}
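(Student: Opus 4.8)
The plan is to establish the four assertions of Theorem~\ref{th_prop1} in the natural order: existence of a minimizer, regularity, then (under connectedness) simplicity and constant sign.

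\emph{Existence and the equation.} First I would take a minimizing sequence $\varphi_k \in W_0^{1,p}(\Omega)$ for the Rayleigh quotient in \eqref{rayleigh}, normalized by $\int_\Omega |\varphi_k|^p\,dx = 1$. By \eqref{eq:lin} the functional $\int_\Omega F^p(\nabla \varphi)\,dx$ controls $\|\nabla \varphi_k\|_{L^p}^p$ from below (up to the constant $a^p$), so the sequence is bounded in $W_0^{1,p}(\Omega)$; passing to a subsequence, $\varphi_k \rightharpoonup u_1$ weakly in $W_0^{1,p}(\Omega)$ and strongly in $L^p(\Omega)$ by Rellich, so $\int_\Omega |u_1|^p = 1$ and in particular $u_1 \neq 0$. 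Since $\xi \mapsto F^p(\xi)$ is convex (it is the $p$-th power of a norm, $p>1$, hence convex) and nonnegative, the functional $\varphi \mapsto \int_\Omega F^p(\nabla\varphi)\,dx$ is weakly lower semicontinuous, so $u_1$ attains the minimum $\lambda_1(p,\Omega)$. The Euler--Lagrange equation \eqref{eig-def} with $\lambda = \lambda_1(p,\Omega)$ follows by differentiating $t\mapsto \int_\Omega F^p(\nabla(u_1+t\varphi))\,dx \big/ \int_\Omega |u_1+t\varphi|^p\,dx$ at $t=0$, using the homogeneity identity $\langle \nabla_\xi F(\xi),\xi\rangle = F(\xi)$ recalled in Section~\ref{anintro}; that $\lambda_1 > 0$ is immediate from \eqref{eq:lin}.

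\emph{Regularity.} The interior $C^{1,\alpha}(\Omega)$ regularity follows from the ellipticity of $\Qp$ guaranteed by hypothesis \eqref{strong}: $u_1$ is a bounded (by De Giorgi--Nash--Moser iteration, or Lieberman/Tolksdorf-type arguments for this class of operators) weak solution of a degenerate/singular elliptic equation in divergence form with the structure of the $p$-Laplacian, so the regularity theory of Tolksdorf and DiBenedetto, adapted to Finsler operators as in the references cited in the excerpt, applies. Continuity up to $\overline\Omega$, i.e. $u_1 \in C(\overline\Omega)$, uses only the boundary condition $u_1 \in W_0^{1,p}(\Omega)$ together with the boundedness of $u_1$; on an arbitrary bounded open set this is the mildest sort of boundary behaviour and can be quoted.

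\emph{Constant sign and simplicity (connected case).} Since $F$ is even, $F(\nabla |u_1|) = F(\nabla u_1)$ a.e., so $|u_1|$ is also a minimizer; hence without loss of generality we may assume $u_1 \geq 0$. The strong maximum principle for $\Qp$ (valid under \eqref{strong}; this is the Finsler analogue of the Harnack inequality / Vázquez strong maximum principle for the $p$-Laplacian) then forces $u_1 > 0$ in the connected open set $\Omega$. For simplicity I would use the hidden-convexity / Picone-type argument: given two positive eigenfunctions $u, v$ associated to $\lambda_1$, one tests the equation for $u$ with $\varphi = (u^p - v^p)/u^{p-1}$ and the equation for $v$ with $\varphi = (v^p - u^p)/v^{p-1}$ (suitably truncated/regularized near the boundary to stay admissible), adds, and invokes the anisotropic Picone inequality $F^p(\nabla u) \geq \langle F^{p-1}(\nabla v)\nabla_\xi F(\nabla v), \nabla(u^p/v^{p-1})\rangle$, which follows from convexity of $F^p$ and the identities in Section~\ref{anintro}; equality forces $u/v$ constant. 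A general positive eigenfunction (for any eigenvalue) is then shown to be a first eigenfunction by the same Picone computation, which yields $\lambda = \lambda_1$ and proportionality to $u_1$.

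\emph{Main obstacle.} The genuinely delicate point is the regularity step for a \emph{general} norm $F$ merely satisfying \eqref{strong}: one must be careful that the available $C^{1,\alpha}$ theory covers operators whose diffusion matrix $\nabla_\xi^2[F^p]$ may degenerate as in the $p$-Laplacian but is otherwise only continuous and positive definite away from the origin; quoting the precise form of the Finsler regularity results (from \cite{bfk,dpgpota} and the Tolksdorf/DiBenedetto tradition) and checking the structure conditions is where the real work sits. The strong maximum principle and Picone steps are, by contrast, algebraic once the right anisotropic inequalities are in hand.
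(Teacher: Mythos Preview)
Your proposal is correct and amounts to unpacking the standard argument that the paper simply cites: the paper's own proof is a two-line reference to \cite{li0,bfk}, and what you have sketched---direct method for existence, Tolksdorf/DiBenedetto-type $C^{1,\alpha}$ regularity under the structure condition \eqref{strong}, constant sign via $|u_1|$ plus the strong maximum principle, and simplicity via the anisotropic Picone identity---is precisely the content of those references. Your identification of the regularity step as the place where the real work lies is accurate; the only caveat is that the claim $u_1\in C(\overline\Omega)$ for a \emph{general} bounded open set does not follow from boundedness and $W_0^{1,p}$-membership alone without some boundary regularity, but the paper does not address this either and simply quotes it.
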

\begin{proof}
The proof can be immediately adapted from the case of $\Omega$  connected and we refer the reader, for example, to \cite{li0,bfk}.
\end{proof}

\begin{thm}
\label{th_prop2}
 Let $\Omega$ be a bounded open set  in $\R^{n}$, $n\ge 2$. Let  $u \in W_{0}^{1,p}(\Omega)$ be an eigenfunction of \eqref{eigpb} associated to an eigenvalue $\lambda$. If $u$ does not change sign in $\Omega$, then there exists a connected component $\Omega_{0}$ of $\Omega$ such that $\lambda=\lambda_{1}(p,\Omega_{0})$ and $u$ is a first eigenfunction in $\Omega_{0}$. In particular, if $\Omega$ is connected then $\lambda=\lambda_{1}(p,\Omega)$ and a constant sign eigenfunction is a first eigenfunction.
\end{thm}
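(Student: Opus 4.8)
The plan is to localize the statement to a single connected component of $\Omega$ and there invoke the standard ``positive eigenfunction $\Rightarrow$ first eigenfunction'' mechanism, which we realize through an anisotropic Picone inequality.

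First I would reduce to $u\ge 0$: since $F$ is even, $\nabla_{\xi}F$ is odd and $F^{p-1}$ is even, so $-u$ solves the same equation with the same $\lambda$, and we may assume $u\ge 0$ in $\Omega$. Next, write $\Omega=\bigcup_{k}\Omega_{k}$ as the disjoint union of its connected components; each $\Omega_{k}$ is open and relatively closed in $\Omega$, so $u|_{\Omega_{k}}\in W^{1,p}_{0}(\Omega_{k})$ (approximate $u$ by $C_{c}^{\infty}(\Omega)$ functions and restrict). Since $u\not\equiv 0$, there is a component $\Omega_{0}$ with $u|_{\Omega_{0}}\not\equiv 0$; testing \eqref{eig-def} against functions $\varphi\in W^{1,p}_{0}(\Omega_{0})$ extended by zero shows that $u|_{\Omega_{0}}$ is a nonnegative eigenfunction of \eqref{eigpb} on the domain $\Omega_{0}$ with the same eigenvalue $\lambda$; in particular $\lambda\ge\lambda_{1}(p,\Omega_{0})$ by \eqref{rayleigh}. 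Finally, since $u\ge 0$ is a nontrivial weak supersolution of $-\Qp u=\lambda u^{p-1}\ge 0$ on the domain $\Omega_{0}$, the strong maximum principle for $\Qp$ (valid under \eqref{strong}, e.g.\ via the Harnack inequality or the anisotropic version of V\'azquez's principle) gives $u>0$ in $\Omega_{0}$. It therefore suffices to prove: a positive eigenfunction on a domain is a first eigenfunction.

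For this, let $u_{1}>0$ be a first eigenfunction on $\Omega_{0}$ (Theorem \ref{th_prop1}), and for $\eps>0$ set $\varphi_{\eps}:=u_{1}^{p}/(u+\eps)^{p-1}$. Using $u_{1}\in C(\overline{\Omega_{0}})$ with $u_{1}=0$ on $\partial\Omega_{0}$, the boundedness of eigenfunctions, $\nabla u,\nabla u_{1}\in L^{p}(\Omega_{0})$ and the pointwise bound $0\le\varphi_{\eps}\le C_{\eps}\,u_{1}$, one checks $\varphi_{\eps}\in W^{1,p}_{0}(\Omega_{0})$. Inserting $\varphi_{\eps}$ into the weak formulation of $-\Qp u=\lambda u^{p-1}$ and using the anisotropic Picone inequality
\[
\Big\langle F^{p-1}(\nabla u)\nabla_{\xi}F(\nabla u),\,\nabla\Big(\tfrac{u_{1}^{p}}{(u+\eps)^{p-1}}\Big)\Big\rangle\ \le\ F^{p}(\nabla u_{1})\qquad\text{a.e. in } \Omega_{0}
\]
— obtained by expanding the gradient, applying Euler's identity $\langle\nabla_{\xi}F(\xi),\xi\rangle=F(\xi)$, the convexity bound $\langle\nabla_{\xi}F(\xi),\eta\rangle\le F(\eta)$, and Young's inequality $a^{p}+(p-1)b^{p}\ge p\,a\,b^{p-1}$ — one gets
\[
\lambda\int_{\Omega_{0}}\frac{u^{p-1}}{(u+\eps)^{p-1}}\,u_{1}^{p}\,dx\ \le\ \int_{\Omega_{0}}F^{p}(\nabla u_{1})\,dx\ =\ \lambda_{1}(p,\Omega_{0})\int_{\Omega_{0}}u_{1}^{p}\,dx .
\]
Letting $\eps\to 0^{+}$, since $u>0$ a.e.\ the left integrand increases to $u_{1}^{p}$, so monotone convergence yields $\lambda\le\lambda_{1}(p,\Omega_{0})$, whence $\lambda=\lambda_{1}(p,\Omega_{0})$. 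Then, $\lambda_{1}(p,\Omega_{0})$ being simple (Theorem \ref{th_prop1}), $u|_{\Omega_{0}}$ must be a nonzero scalar multiple of $u_{1}$, i.e.\ a first eigenfunction in $\Omega_{0}$; if $\Omega$ is connected then $\Omega_{0}=\Omega$, which is the last assertion.

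The part I expect to be the main obstacle is making the anisotropic Picone step fully rigorous: the inequality itself (including the behaviour on $\{\nabla u=0\}$, where $F^{p-1}(\nabla u)\nabla_{\xi}F(\nabla u)=\tfrac1p\nabla_{\xi}[F^{p}](\nabla u)$ vanishes, so all terms stay controlled) and the verification that $\varphi_{\eps}$ is an admissible test function; both are routine but need care. As an alternative to this step, once one has reduced to the domain $\Omega_{0}$ one may instead invoke the known fact that on a connected open set the only positive eigenfunctions of $-\Qp$ are the first eigenfunctions (see \cite{bfk,dpgpota}).
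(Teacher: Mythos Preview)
Your proof is correct and follows essentially the same strategy as the paper: reduce to a connected component via the strong maximum principle, and there use that a positive eigenfunction must be a first eigenfunction. The only difference is that the paper simply cites \cite{li0,dpgpota} for this last fact on domains, whereas you spell it out via the anisotropic Picone inequality (and even note this alternative yourself at the end).
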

\begin{proof}
If $\Omega$ is connected, a proof can be found in \cite{li0,dpgpota}. Otherwise, if $u\ge 0$ in $\Omega$ disconnected, by the maximum principle $u$ must be either positive or identically zero in each connected component of $\Omega$. Hence there exists a connected component $\Omega_{0}$ such that $u$ coincides in $\Omega_{0}$ with a positive eigenfunction relative to $\lambda$. By the previous case, $\lambda=\lambda_{1}(p,\Omega_{0})$ and the proof is completed.
\end{proof}
Here we list some other useful and interesting properties that can be proved in a similar way than the Euclidean case.
\begin{prop}
\label{fireig}
 Let $\Omega$ be a bounded open set in $\R^{n}$, $n\ge 2$, the following properties hold.
\begin{enumerate}
\item For $t>0$ it holds $\lambda_{1} (p,\,t\Omega)=t^{-p}\lambda_1 (p,\Omega)$.
\item If $\Omega_1\subseteq\Omega_2\subseteq\Omega$, then $\lambda_1(p,\Omega_1)\ge \lambda_1(p,\Omega_2)$.
\item For all $1< p< s< +\infty$ we have $p[\lambda_1(p,\Omega)]^{1/p}<s[\lambda_1(s,\Omega)]^{1/s}$.
\end{enumerate}
\end{prop}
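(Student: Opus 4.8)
The plan is to establish each of the three properties by essentially reducing to the variational characterization \eqref{rayleigh}, using scaling of test functions and monotonicity of the Rayleigh quotient. For part (1), I would take $u_1$ an optimal function for $\lambda_1(p,\Omega)$ and consider the rescaled function $v(y):=u_1(y/t)$ on $t\Omega$; a change of variables shows $\int_{t\Omega}F^p(\nabla v)\,dy = t^{n-p}\int_\Omega F^p(\nabla u_1)\,dx$ (using the $1$-homogeneity of $F$, so $F^p$ is $p$-homogeneous) and $\int_{t\Omega}|v|^p\,dy = t^n\int_\Omega |u_1|^p\,dx$, whence the Rayleigh quotient of $v$ equals $t^{-p}\lambda_1(p,\Omega)$; this gives $\lambda_1(p,t\Omega)\le t^{-p}\lambda_1(p,\Omega)$, and the reverse inequality follows symmetrically by scaling back (or by applying the inequality to $t\Omega$ with scaling factor $1/t$).

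For part (2), the key observation is that any $\varphi\in W_0^{1,p}(\Omega_1)$ extends by zero to an element of $W_0^{1,p}(\Omega_2)$ with the same Rayleigh quotient; hence the minimum defining $\lambda_1(p,\Omega_2)$ is taken over a larger class, giving $\lambda_1(p,\Omega_2)\le\lambda_1(p,\Omega_1)$. (The same argument with $\Omega_2$ in place of $\Omega$ makes the extra set $\Omega$ in the statement irrelevant; I would just note monotonicity with respect to inclusion.)

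For part (3), I would fix a nonzero $\varphi\in W_0^{1,p}(\Omega)\cap W_0^{1,s}(\Omega)$ — for instance $u_1$ itself, which lies in $C^{1,\alpha}(\Omega)\cap C(\overline\Omega)$ by Theorem \ref{th_prop1} and so belongs to every $W_0^{1,q}(\Omega)$ — and compare the quantities $q\,[\lambda_1(q,\Omega)]^{1/q}$ for $q=p$ and $q=s$. The natural route is to show that $q\mapsto q\,[\lambda_1(q,\Omega)]^{1/q}$ (or rather an associated quantity attached to a fixed test function, then optimized) is strictly increasing; concretely one writes $[\lambda_1(q,\Omega)]^{1/q}=\inf_\varphi \norm{F(\nabla\varphi)}_{L^q(\Omega)}/\norm{\varphi}_{L^q(\Omega)}$ and uses that, for a fixed $\varphi$, applying Hölder-type interpolation in the exponent $q$ together with the constant $q$ in front produces strict monotonicity — this is the anisotropic analogue of the classical fact (going back to the Euclidean $p$-Laplacian literature, e.g. via the identity relating $\lambda_1(p,\cdot)$ to the best constant in a Poincaré inequality and Jensen's inequality applied to the normalized measure $|\varphi|^p/\norm{\varphi}_p^p$).

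The routine parts are (1) and (2): they are pure scaling and domain-monotonicity and should be dispatched in a few lines. The main obstacle is (3): one must produce the \emph{strict} inequality and make sure the comparison of infima over two different spaces $W_0^{1,p}$ and $W_0^{1,s}$ is legitimate. I expect the cleanest argument is to first prove $q\,[\lambda_1(q,\Omega)]^{1/q}$ is nondecreasing by a density/Hölder argument valid on a common dense class of Lipschitz functions with compact support, and then upgrade to strictness by observing that equality in the relevant Hölder (or Jensen) step forces $F(\nabla\varphi)$ to be constant on $\{\varphi\neq 0\}$, which is incompatible with $\varphi\in W_0^{1,p}(\Omega)$ being nonzero (since then $\varphi$ would have to be, roughly, an anisotropic ``cone'' that cannot vanish on all of $\partial\Omega$ while having constant $F(\nabla\varphi)$). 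If a self-contained strictness argument is cumbersome, an acceptable fallback is to cite the Euclidean proof in \cite{L2} or \cite{dpgmana} and remark that it carries over verbatim once $|\cdot|$ is replaced by $F(\cdot)$, since the only structural facts used are the $p$-homogeneity of the numerator and Hölder's inequality.
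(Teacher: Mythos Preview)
Parts (1) and (2) are fine and coincide with the paper's treatment, which also dispatches them in one line from the variational characterization.

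For part (3), however, your primary plan has a genuine gap. Fixing a single test function $\varphi$ and comparing the quantities $q\,\|F(\nabla\varphi)\|_{L^q}/\|\varphi\|_{L^q}$ for $q=p$ and $q=s$ does \emph{not} work: this ratio is not, for a generic $\varphi$, monotone in $q$, so no amount of H\"older or Jensen applied with the same $\varphi$ on both sides will produce the inequality. The mechanism in the paper (following Lindqvist \cite{lpota}) is different: one does not fix the test function but rather \emph{changes} it between the two problems via the substitution
\[
\phi=|\psi|^{\frac{s}{p}-1}\psi,\qquad \psi\in W_0^{1,s}(\Omega)\cap L^\infty(\Omega),\ \psi\ge 0.
\]
Then $|\phi|^p=|\psi|^s$ and, by the $1$-homogeneity of $F$, $F^p(\nabla\phi)=(s/p)^p|\psi|^{s-p}F^p(\nabla\psi)$, so $\phi$ is admissible for the $p$-problem and
\[
[\lambda_1(p,\Omega)]^{1/p}\le \frac{s}{p}\left(\frac{\int_\Omega |\psi|^{s-p}F^p(\nabla\psi)\,dx}{\int_\Omega|\psi|^s\,dx}\right)^{1/p}
\le \frac{s}{p}\left(\frac{\int_\Omega F^s(\nabla\psi)\,dx}{\int_\Omega|\psi|^s\,dx}\right)^{1/s},
\]
the second step being H\"older with exponents $s/p$ and $s/(s-p)$. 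Minimizing over $\psi$ yields $p[\lambda_1(p,\Omega)]^{1/p}\le s[\lambda_1(s,\Omega)]^{1/s}$. This power substitution is the missing idea in your outline; your ``Jensen with the measure $|\varphi|^p/\|\varphi\|_p^p$'' hint is in the right neighbourhood but does not by itself produce the comparison without the change of unknown.

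On strictness: your instinct to analyze the equality case in H\"older is correct (equality would force $F(\nabla\psi)$ to be a constant multiple of $|\psi|$ on $\{\psi\ne 0\}$, which is incompatible with $\psi$ being a nontrivial $W_0^{1,s}$ function), and in fact the paper's own write-up is no more explicit than yours on this point --- it simply asserts the strict inequality after the H\"older step and refers to \cite{lpota}. So your fallback of citing the Euclidean argument and noting it carries over verbatim is exactly what the paper does; it is only your proposed self-contained route that needs to be replaced by the Lindqvist substitution.
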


\begin{proof}
The first two properties are immediate from \eqref{rayleigh}. As regards the third property, the inequality derives from the H\"older inequality, similarly as in \cite{lpota}. Indeed, taking $\phi=|\psi|^{\frac sp-1}\psi$, $\psi\in W_{0}^{1,p}(\Omega)\cap L^{\infty}(\Omega)$, $\psi\ge 0$, we have by \eqref{eq:omo} that
\[
[\lambda_{1}(p,\Omega)]^{\frac1p}\le \frac{s}{p}\left( \frac{\ds\int_{\Omega}|\psi|^{s-p}F^{p}(\nabla \psi)dx}{\ds\int_{\Omega}|\psi|^{s}dx} \right)^{\frac 1p} \le \frac{s}{p}
\left( \frac{\ds\int_{\Omega} F^{s}(\nabla \psi)dx}{\ds\int_{\Omega}|\psi|^{s}dx} \right)^{\frac 1s}
\]
By minimizing with respect to $\psi$, we get the thesis.
\end{proof}
In addition, the Faber-Krahn inequality for $\lambda_{1}(p,\Omega)$ holds. 
\begin{thm}
 Let $\Omega$ be a bounded open set in $\R^{n}$, $n\ge 2$, then
\begin{equation}
\label{faberk}
|\Omega|^{p/N}\lambda_1(p,\Omega)\geq\kappa_N^{p/N}\lambda_1(p,\mathcal W).
\end{equation}
Moreover, equality sign in \eqref{faberk} holds if $\Omega$ is homothetic to the Wulff shape.
\end{thm}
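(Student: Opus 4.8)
The plan is to reduce \eqref{faberk} to the anisotropic (convex) symmetrization together with the corresponding P\'olya--Szeg\H{o} principle, in the spirit of \cite{bfk}. Since by \eqref{rayleigh} the value $\lambda_{1}(p,\Omega)$ is attained, let $u\in W^{1,p}_{0}(\Omega)$ be a minimizer; as $F$ is even, $F(\nabla\abs{u})=F(\nabla u)$ a.e., so we may assume $u\ge 0$. Let $\mathcal W^{\star}=\mathcal W_{R}$ be the Wulff shape centered at the origin with $\abs{\mathcal W^{\star}}=\abs{\Omega}$, i.e.\ $\kappa_{n}R^{n}=\abs{\Omega}$, and let $u^{\star}\in W^{1,p}_{0}(\mathcal W^{\star})$ be the convex rearrangement of $u$, that is the function which is radially decreasing with respect to $F^{o}$ and whose superlevel sets $\set{u^{\star}>t}$ are the Wulff shapes centered at the origin with $\abs{\set{u^{\star}>t}}=\abs{\set{u>t}}=:\mu(t)$. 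By equimeasurability,
\[
\int_{\mathcal W^{\star}}\abs{u^{\star}}^{p}\,dx=\int_{\Omega}\abs{u}^{p}\,dx .
\]

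The crucial ingredient is the anisotropic P\'olya--Szeg\H{o} inequality
\[
\int_{\mathcal W^{\star}}F^{p}(\nabla u^{\star})\,dx\le \int_{\Omega}F^{p}(\nabla u)\,dx ,
\]
which I would obtain in the usual way. Denoting by $P_{F}(E)=\int_{\de^{*}E}F(\nu_{E})\,d\mathcal H^{n-1}$ the anisotropic perimeter, for a.e.\ $t$ the coarea formula and H\"older's inequality on $\set{u=t}$ with respect to the measure $\abs{\nabla u}^{-1}\,d\mathcal H^{n-1}$ give
\[
P_{F}(\set{u>t})=\int_{\set{u=t}}F(\nabla u)\,\frac{d\mathcal H^{n-1}}{\abs{\nabla u}}\le \left(-\frac{d}{dt}\int_{\set{u>t}}F^{p}(\nabla u)\,dx\right)^{\frac1p}\bigl(-\mu'(t)\bigr)^{\frac{p-1}{p}},
\]
so that, by the Wulff (anisotropic isoperimetric) inequality $P_{F}(\set{u>t})\ge n\kappa_{n}^{1/n}\mu(t)^{\frac{n-1}{n}}$,
\[
-\frac{d}{dt}\int_{\set{u>t}}F^{p}(\nabla u)\,dx\ \ge\ \frac{\bigl(n\kappa_{n}^{1/n}\bigr)^{p}\,\mu(t)^{\frac{p(n-1)}{n}}}{\bigl(-\mu'(t)\bigr)^{p-1}} .
\]
For $u^{\star}$ all these inequalities are equalities --- its superlevel sets are Wulff shapes, on which $\abs{\nabla u^{\star}}$ and $F(\nabla u^{\star})$ are constant along level sets, and the Wulff inequality is saturated --- so integrating in $t$ yields the claim.

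Combining the two displays with the variational characterization \eqref{rayleigh} applied to $\mathcal W^{\star}$,
\[
\lambda_{1}(p,\Omega)=\frac{\ds\int_{\Omega}F^{p}(\nabla u)\,dx}{\ds\int_{\Omega}\abs{u}^{p}\,dx}\ \ge\ \frac{\ds\int_{\mathcal W^{\star}}F^{p}(\nabla u^{\star})\,dx}{\ds\int_{\mathcal W^{\star}}\abs{u^{\star}}^{p}\,dx}\ \ge\ \lambda_{1}(p,\mathcal W^{\star}),
\]
and by Proposition \ref{fireig}(1), $\lambda_{1}(p,\mathcal W^{\star})=\lambda_{1}(p,R\,\mathcal W)=R^{-p}\lambda_{1}(p,\mathcal W)=\bigl(\abs{\Omega}/\kappa_{n}\bigr)^{-p/n}\lambda_{1}(p,\mathcal W)$; multiplying by $\abs{\Omega}^{p/n}$ gives \eqref{faberk}. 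If $\Omega=\mathcal W_{R}(x_{0})$ is homothetic to the Wulff shape, then $\abs{\Omega}=\kappa_{n}R^{n}$ and, by translation invariance of the operator together with Proposition \ref{fireig}(1), $\lambda_{1}(p,\Omega)=R^{-p}\lambda_{1}(p,\mathcal W)$, so equality holds in \eqref{faberk}. The only genuinely delicate step is the rigorous justification of the P\'olya--Szeg\H{o} inequality --- in particular that $u^{\star}\in W^{1,p}_{0}(\mathcal W^{\star})$ and that the coarea manipulations are legitimate across the critical levels of $u$; a full characterization of the equality cases would in addition require the rigidity of the Wulff inequality, which is not needed for the statement above.
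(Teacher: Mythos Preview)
Your argument is correct and is precisely the convex-symmetrization route the paper refers to: the paper does not spell out a proof but cites \cite{bfk}, whose method rests on the anisotropic P\'olya--Szeg\H{o} inequality of \cite{aflt}, which is exactly what you reproduce. Your caveat about the technical justification of P\'olya--Szeg\H{o} (membership of $u^{\star}$ in $W^{1,p}_{0}(\mathcal W^{\star})$ and the coarea manipulations on critical levels) is apt and is handled in \cite{aflt}; the equality cases, which you rightly note are not required here, are treated in \cite{etpolya,fvpolya}.
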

The proof of this inequality, contained in \cite{bfk}, is based on a symmetrization technique introduced in \cite{aflt} (see \cite{etpolya,fvpolya} for the equality cases).

Using the previous result we can prove the following property of $\lambda_{1}(p,\Omega)$.
\begin{prop}
\label{isoprop}
 Let $\Omega$ be a bounded domain in $\R^{n}$, $n\ge 2$. The first eigenvalue of \eqref{eigpb}, $\lambda_1(p,\Omega)$, is isolated.
\end{prop}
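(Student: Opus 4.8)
The plan is to argue by contradiction, using the Faber--Krahn inequality \eqref{faberk} on the negative nodal domain of eigenfunctions associated to eigenvalues slightly above $\lambda_{1}(p,\Omega)$. Assume $\lambda_{1}(p,\Omega)$ is not isolated: there exist eigenvalues $\mu_{k}>\lambda_{1}(p,\Omega)$ with $\mu_{k}\to\lambda_{1}(p,\Omega)$ and associated eigenfunctions $u_{k}$, normalized by $\|u_{k}\|_{L^{p}(\Omega)}=1$. Since $\mu_{k}>\lambda_{1}(p,\Omega)$ and $\Omega$ is connected, Theorem \ref{th_prop2} forces each $u_{k}$ to change sign, so the open sets $\Omega_{k}^{\pm}:=\{\pm u_{k}>0\}$ are both nonempty.

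First I would identify the limit of $(u_{k})$. Recalling that for an eigenfunction the Rayleigh quotient equals the eigenvalue, $\int_{\Omega}F^{p}(\nabla u_{k})\,dx=\mu_{k}\to\lambda_{1}(p,\Omega)$, so $(u_{k})$ is bounded in $W^{1,p}_{0}(\Omega)$. Up to a subsequence $u_{k}\rightharpoonup u$ in $W^{1,p}_{0}(\Omega)$ and $u_{k}\to u$ in $L^{p}(\Omega)$, hence $\|u\|_{L^{p}(\Omega)}=1$ and, by weak lower semicontinuity of $\varphi\mapsto\int_{\Omega}F^{p}(\nabla\varphi)\,dx$,
\[
\lambda_{1}(p,\Omega)\le\frac{\int_{\Omega}F^{p}(\nabla u)\,dx}{\int_{\Omega}|u|^{p}\,dx}\le\liminf_{k}\frac{\int_{\Omega}F^{p}(\nabla u_{k})\,dx}{\int_{\Omega}|u_{k}|^{p}\,dx}=\lim_{k}\mu_{k}=\lambda_{1}(p,\Omega).
\]
Thus $u$ attains the minimum in \eqref{rayleigh}, i.e.\ $u$ is a first eigenfunction; by Theorem \ref{th_prop1} it has constant sign, and the strong maximum principle for $\Qp$ (applicable since eigenfunctions are in $C^{1,\alpha}(\Omega)$) upgrades this to $u>0$ in $\Omega$, after possibly replacing $u_{k}$ by $-u_{k}$.

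Next I would show $|\Omega_{k}^{-}|\to 0$: for every $\delta>0$ one has $\Omega_{k}^{-}\cap\{u>\delta\}\subseteq\{|u_{k}-u|>\delta\}$, so by Chebyshev's inequality $|\Omega_{k}^{-}|\le\delta^{-p}\|u_{k}-u\|_{L^{p}(\Omega)}^{p}+|\{u\le\delta\}|$; letting $k\to\infty$ and then $\delta\to 0^{+}$, and using $u>0$ a.e., gives the claim. On the other hand, testing \eqref{eig-def} for $u_{k}$ with $\varphi=u_{k}^{-}$, together with $\nabla u_{k}^{-}=-\nabla u_{k}\,\chi_{\Omega_{k}^{-}}$ a.e., the identity $\langle\nabla_{\xi}F(\xi),\xi\rangle=F(\xi)$, and the evenness of $F$, yields $\int_{\Omega_{k}^{-}}F^{p}(\nabla u_{k}^{-})\,dx=\mu_{k}\int_{\Omega_{k}^{-}}|u_{k}^{-}|^{p}\,dx$. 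Since $u_{k}^{-}\ge 0$ lies in $W^{1,p}_{0}(\Omega)$, it belongs to $W^{1,p}_{0}(\Omega_{k}^{-})$ (approximate by $(u_{k}^{-}-1/j)^{+}$, whose support is a compact subset of $\Omega_{k}^{-}$) and is not identically zero, hence it is admissible in the characterization \eqref{rayleigh} of $\lambda_{1}(p,\Omega_{k}^{-})$, giving $\lambda_{1}(p,\Omega_{k}^{-})\le\mu_{k}$. Combining this with \eqref{faberk} applied to $\Omega_{k}^{-}$,
\[
\mu_{k}\ge\lambda_{1}(p,\Omega_{k}^{-})\ge\kappa_{n}^{p/n}\,\lambda_{1}(p,\mathcal W)\,|\Omega_{k}^{-}|^{-p/n},
\]
and the right-hand side diverges as $k\to\infty$ since $|\Omega_{k}^{-}|\to 0$, contradicting $\mu_{k}\to\lambda_{1}(p,\Omega)<+\infty$.

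The routine points (the $W^{1,p}_{0}$ bound, extraction of the convergent subsequence, the identity obtained by testing with $u_{k}^{-}$) are standard. \textbf{The main obstacle} is the interplay of two facts, both of which require connectedness of $\Omega$: that the limit $u$ is a \emph{strictly positive} first eigenfunction, so that the negative nodal set has vanishing measure (this rests on the strong maximum principle for $\Qp$), and that $u_{k}^{-}$ is a legitimate competitor for $\lambda_{1}(p,\Omega_{k}^{-})$, which needs $u_{k}^{-}\in W^{1,p}_{0}(\Omega_{k}^{-})$. Once these are secured, the scaling in Faber--Krahn does the rest.
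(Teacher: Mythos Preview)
Your proof is correct and follows essentially the same strategy as the paper's: identify the weak limit $u$ as a first eigenfunction via lower semicontinuity, use Theorem~\ref{th_prop2} to ensure each $u_{k}$ changes sign, and invoke the Faber--Krahn inequality on a nodal domain to reach a contradiction. The only organizational difference is that the paper applies Faber--Krahn to both $\Omega_{k}^{+}$ and $\Omega_{k}^{-}$ to bound their measures from below and then concludes that the limit must change sign, whereas you first use strict positivity of $u$ (via the strong maximum principle) to force $|\Omega_{k}^{-}|\to 0$ and then derive the contradiction from Faber--Krahn on $\Omega_{k}^{-}$ alone; the ingredients are identical.
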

\begin{proof}
We argue similarly  as in \cite{L2}. For completeness we give the proof. For convenience we write $\lambda_{1}$ instead of $\lambda_{1}(p,\Omega)$.  Let $\lambda_{k}\ne \lambda_{1}$ a sequence of eigenvalues such that 
\[
\lim_{k \to +\infty}\lambda_{k}=\lambda_{1}
\]
Let $u_{k}$ be a normalized  eigenfunction associated to $\lambda_{k}$
that is,
\begin{equation}
\label{propsucc}
\lambda_{k}=\int_{\Omega}F^{p}(\nabla u_{k})\,dx \quad \text{ and } \quad\int_{\Omega}|u_k|^{p} \,dx=1
\end{equation}
By \eqref{propsucc}, there exists a function $u \in W_{0}^{1,p}(\Omega)$ such that, up to a subsequence
\[
u_{k} \to u \quad \text{ in }L^{p}(\Omega)  \quad \nabla u_{k} \rightharpoonup \nabla u\text{ weakly in }L^{p}(\Omega).
\] 
By the strong convergence of $u_{k}$ in $L^{p}(\Omega)$ and, recalling that $F$ is convex, by weak lower semicontinuity, it follows that
\[
\ds\int_{\Omega}|u|^{p}dx=1\quad\text{and}\quad \int_{\Omega}F^{p}(\nabla u)\, dx \le \lim_{k\to \infty} \lambda_{k}=\lambda_{1}.
\]
Hence, $u$ is a first eigenfunction. On the other hand, being $u_{k}$ not a first eigenfunction, by Theorem \ref{th_prop2} it has to change sign. Hence, the sets $\Omega_{k}^{+}=\{u_{k}>0\}$ and $\Omega_{k}^{-}=\{u_{k}<0\}$ are nonempty and, as a consequence of the Faber-Krahn inequality and of Theorem \ref{th_prop2}, it follows that
\[
\lambda_{k}=  \lambda_{1}(p,\Omega^{+}_{k})\ge\frac{C_{n,F}}{|\Omega^{+}_{k}|^{\frac pn}},\qquad
\lambda_{k}=  \lambda_{1}(p,\Omega^{-}_{k})\ge\frac{C_{n,F}}{|\Omega^{-}_{k}|^{\frac pn}}. 
\]
This implies that both $|\Omega^{+}_{k}|$ and $|\Omega_{k}^{-}|$ cannot vanish as $k\to +\infty$ and finally, that $u_{k}$ converges to a function $u$ which changes sign in $\Omega$. This is in contradiction with the characterization of the first eigenfunctions, and the proof is completed.
\end{proof}

\subsection{Higher eigenvalues}
First of all, we recall the following result (see \cite[Theorem 1.4.1]{thfr} and the references therein), which assures the existence of infinite eigenvalues of $-\mathcal{Q}_{p}$. We use the following notation. Let
$\mathbb S^{n-1}$ be the unit Euclidean sphere in $\R^{n}$, and 
\begin{equation}
\label{emme}
M=\{u\in W_{0}^{1,p}(\Omega)\colon \int_{\Omega}|u|^{p}dx=1 \}.
\end{equation}
Moreover, let $\mathcal C_{n}$ be the class of all odd and continuous mappings from $\mathbb S^{n-1}$ to $M$. Then, for any fixed $f\in \mathcal C_{n}$, we have $f:\omega\in \mathbb S^{n-1}\mapsto f_{\omega}\in M$. 
\begin{prop}
\label{infiniti}
Let $\Omega$ a bounded open set of $\R^{n}$, for any $k \in \N$, the value
\begin{equation*}
\tilde\lambda_{k}(p,\Omega)=\inf_{f\in \mathcal C_{n}}\max_{\omega\in \mathbb S^{n-1}}\int_{\Omega} F^{p}(\nabla f_{\omega})dx
\end{equation*}
is an eigenvalue of $-\mathcal Q_{p}$. Moreover, 
\[
0< \tilde\lambda_{1}(p,\Omega)=\lambda_{1}(p,\Omega)\le \tilde\lambda_{2}(p,\Omega)\le \ldots \le \tilde\lambda_{k}(p,\Omega)\le \tilde\lambda_{k+1}(p,\Omega)\le \ldots,
\]
and
\[
\tilde\lambda_{k}(p,\Omega)\to \infty \text{ as }k\to \infty.
\]
\end{prop}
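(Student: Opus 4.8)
\textbf{Proof strategy for Proposition \ref{infiniti}.}

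The plan is to realize this as a standard application of a Lusternik--Schnirelman--type min-max scheme on the $C^1$ manifold $M$ defined in \eqref{emme}, where the relevant functional is
\[
J(u):=\int_\Omega F^p(\nabla u)\,dx.
\]
First I would observe that, under the assumptions on $F$ collected in Section \ref{anintro} (convexity, $1$-homogeneity, the two-sided bound $a|\xi|\le F(\xi)\le b|\xi|$, and the ellipticity condition \eqref{strong}), the functional $J$ is of class $C^1$ on $W_0^{1,p}(\Omega)$, it is even, and it is coercive and weakly lower semicontinuous because of the lower bound $F^p(\nabla u)\ge a^p|\nabla u|^p$ together with the convexity of $F$. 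The constraint $u\mapsto\int_\Omega|u|^p\,dx$ is also $C^1$ on $W_0^{1,p}(\Omega)$ with nonvanishing differential on $M$, so $M$ is a complete symmetric $C^1$ Banach manifold and the critical points of $J$ restricted to $M$ are exactly the eigenfunctions in \eqref{eig-def}, with Lagrange multiplier equal to the eigenvalue. Hence it suffices to produce infinitely many critical values.

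Next I would introduce the topological index. For a symmetric (i.e.\ $-A=A$) subset $A$ of $M$ not containing the origin, let $\gamma(A)$ denote the Krasnoselskii genus, and set
\[
\tilde\lambda_k(p,\Omega):=\inf_{\substack{A\subseteq M\ \mathrm{symmetric}\\ \gamma(A)\ge k}}\ \sup_{u\in A} J(u).
\]
The formulation in the statement in terms of odd continuous maps $f\colon\mathbb S^{n-1}\to M$ is equivalent on the relevant scale: an odd continuous image of $\mathbb S^{k-1}$ in $M$ has genus exactly $k$, so the min-max over $\mathcal C_n$ reproduces the first $n$ of these values, and one reproduces the higher ones by taking odd maps from higher-dimensional spheres; alternatively one works directly with the genus definition above, which is cleaner. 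The standard Lusternik--Schnirelman machinery then gives: each $\tilde\lambda_k$ is a critical value of $J|_M$ provided the Palais--Smale condition holds, the sequence is nondecreasing and, since the sublevel sets of $J$ on $M$ are bounded in $W_0^{1,p}(\Omega)$ and $W_0^{1,p}(\Omega)$ embeds compactly in $L^p(\Omega)$, each sublevel set has finite genus, which forces $\tilde\lambda_k\to\infty$. The identification $\tilde\lambda_1=\lambda_1(p,\Omega)>0$ is immediate from the Rayleigh characterization \eqref{rayleigh} since a set of genus $\ge1$ is just any nonempty symmetric set, and positivity follows from $J(u)\ge a^p\lambda_1^{\mathrm{Eucl}}(p,\Omega)>0$ on $M$ or directly from the Poincaré inequality.

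The main obstacle, and the only genuinely nontrivial analytic input, is verifying the Palais--Smale condition for $J$ on $M$: given $u_k\in M$ with $J(u_k)$ bounded and $\|(J|_M)'(u_k)\|\to0$, one must extract a strongly convergent subsequence in $W_0^{1,p}(\Omega)$. Boundedness of $\nabla u_k$ in $L^p$ is clear, so up to a subsequence $u_k\rightharpoonup u$ weakly in $W_0^{1,p}(\Omega)$ and $u_k\to u$ strongly in $L^p(\Omega)$; from $(J|_M)'(u_k)\to0$ one reads off that $-\Qp u_k-\mu_k|u_k|^{p-2}u_k\to 0$ in $W^{-1,p'}(\Omega)$ with $\mu_k=J(u_k)$ bounded, and testing the difference with $u_k-u$ together with the strict monotonicity of the operator $\xi\mapsto \frac1p\nabla_\xi[F^p](\xi)$ that follows from the ellipticity hypothesis \eqref{strong} yields $\int_\Omega\langle F^{p-1}(\nabla u_k)\nabla_\xi F(\nabla u_k)-F^{p-1}(\nabla u)\nabla_\xi F(\nabla u),\nabla u_k-\nabla u\rangle\,dx\to0$, hence $\nabla u_k\to\nabla u$ in $L^p(\Omega)$ by a standard monotonicity-plus-uniform-convexity argument (the vectorial analogue of the Euclidean $p$-Laplacian case). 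Since all of this is by now routine once \eqref{strong} is in force, and the topological part is classical, I would keep the write-up short and cite \cite{thfr} (and the references therein, such as the general framework in \cite{L2}) for the details, recording only the verification that the present anisotropic operator fits the abstract hypotheses.
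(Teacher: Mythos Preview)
Your proposal is correct, and in fact goes well beyond what the paper itself does: in the paper this proposition is not proved at all but simply recalled from \cite[Theorem~1.4.1]{thfr} with a one-line citation. Your outline of the underlying Lusternik--Schnirelman argument (even $C^1$ functional on the constraint manifold $M$, Palais--Smale via strict monotonicity from \eqref{strong}, finite genus of sublevels giving divergence) is exactly the machinery behind that reference, so the two are aligned; your suggestion to keep the write-up short and defer to \cite{thfr} is precisely what the paper does.

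One useful observation you make implicitly and that is worth flagging: as written, the formula in the statement uses $\mathcal C_{n}$ and $\mathbb S^{n-1}$ (with $n$ the ambient dimension) for every $k$, so the right-hand side does not actually depend on $k$; this is a slip in the paper's notation, and the intended meaning is the genus-$k$ min-max you describe (odd maps from $\mathbb S^{k-1}$, or equivalently symmetric sets of genus $\ge k$). Your remark that the sphere-map formulation and the Krasnoselskii-genus formulation agree is the right way to reconcile the stated formula with a $k$-dependent quantity.
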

Hence, we have at least a sequence of eigenvalues of $-\Qp$. Furthermore, the following proposition holds.
\begin{prop}
Let $\Omega$ a bounded open set of $\R^{n}$. The spectrum of $-\Qp$ is a closed set. 
\end{prop}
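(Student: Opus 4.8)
The plan is to take a sequence $\{\lambda_k\}$ of eigenvalues of $-\Qp$ with $\lambda_k\to\lambda$ and to show that $\lambda$ is itself an eigenvalue. Let $u_k$ be eigenfunctions associated to $\lambda_k$, normalized so that $\int_\Omega|u_k|^p\,dx=1$, and hence $\int_\Omega F^p(\nabla u_k)\,dx=\lambda_k$. Since $\{\lambda_k\}$ is bounded, $\{u_k\}$ is bounded in $W_0^{1,p}(\Omega)$, so, up to a subsequence, $u_k\rightharpoonup u$ weakly in $W_0^{1,p}(\Omega)$ and, by the compact embedding, $u_k\to u$ strongly in $L^p(\Omega)$. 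In particular $\int_\Omega|u|^p\,dx=1$, so that $u\ne 0$.

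The crucial step is to upgrade the weak convergence of the gradients to strong convergence in $L^p(\Omega)$. I would test \eqref{eig-def} for $u_k$ with $\varphi=u_k-u\in W_0^{1,p}(\Omega)$: the right-hand side tends to $0$ because $|u_k|^{p-2}u_k$ is bounded in $L^{p'}(\Omega)$ while $u_k-u\to 0$ in $L^p(\Omega)$; moreover $\int_\Omega\langle F^{p-1}(\nabla u)\nabla_\xi F(\nabla u),\nabla(u_k-u)\rangle\,dx\to 0$ by weak convergence, since $F^{p-1}(\nabla u)\nabla_\xi F(\nabla u)\in L^{p'}(\Omega)$. Recalling that $F^{p-1}(\xi)\nabla_\xi F(\xi)=\tfrac1p\nabla_\xi[F^p](\xi)$, subtracting these yields
\[
\int_\Omega\Big\langle \tfrac1p\nabla_\xi[F^p](\nabla u_k)-\tfrac1p\nabla_\xi[F^p](\nabla u),\,\nabla u_k-\nabla u\Big\rangle\,dx\ \longrightarrow\ 0 .
\]
By \eqref{strong} the map $\xi\mapsto\tfrac1p\nabla_\xi[F^p](\xi)$ is strictly monotone, and the associated pointwise inequalities (the anisotropic analogues of the classical monotonicity inequalities for the $p$-Laplacian) bound the integrand from below by a positive multiple of $|\nabla u_k-\nabla u|^p$ when $p\ge 2$, and, when $1<p<2$, by a quantity which, combined with the Hölder inequality and the $W_0^{1,p}$-bound on $u_k$, still forces $\nabla u_k\to\nabla u$ in $L^p(\Omega)$. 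This weak-to-strong upgrade is the main obstacle, and it is exactly the point where the uniform ellipticity \eqref{strong} is used, with the two ranges of $p$ treated separately.

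Once strong convergence of the gradients is at hand, I would pass to a further subsequence along which $\nabla u_k\to\nabla u$ a.e.\ in $\Omega$; together with the growth bound $|F^{p-1}(\xi)\nabla_\xi F(\xi)|\le C|\xi|^{p-1}$ (a consequence of the $1$-homogeneity of $F$ and \eqref{eq:lin}), dominated convergence gives $F^{p-1}(\nabla u_k)\nabla_\xi F(\nabla u_k)\to F^{p-1}(\nabla u)\nabla_\xi F(\nabla u)$ in $L^{p'}(\Omega)$. Letting $k\to\infty$ in \eqref{eig-def}, using $\lambda_k\to\lambda$ and $u_k\to u$ in $L^p(\Omega)$ on the right-hand side, I conclude that $u$ satisfies \eqref{eig-def} with eigenvalue $\lambda$; since $u\ne 0$, $\lambda$ is an eigenvalue of $-\Qp$, and therefore the spectrum is closed.
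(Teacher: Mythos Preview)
Your argument is correct and follows essentially the same route as the paper: normalize the eigenfunctions, extract a weakly convergent subsequence, test the equation with $u_k-u$ and combine with the weak convergence of $\nabla u_k$ to drive the monotonicity integral to zero, then use the strict monotonicity of $\xi\mapsto\tfrac1p\nabla_\xi[F^p](\xi)$ to upgrade to strong convergence of the gradients and pass to the limit. The paper condenses the strong-convergence step into the phrase ``by nowadays standard arguments'', whereas you spell out the $p\ge 2$ versus $1<p<2$ dichotomy and make explicit that $\|u\|_{L^p}=1$ forces $u\ne 0$, a point the paper leaves implicit.
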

\begin{proof}
Let $\lambda_k$ be a sequence of eigenvalues converging to $\mu<+\infty$ and let $u_k $ be the corresponding normalized eigenfunctions, that is such that $\|u_k\|_{L^p(\Omega)}=1$. We have to show that $\mu$ is an eigenvalue of $-\Qp$.

We have that
\begin{equation}
\label{limeq}
\int_{\Omega}\langle F^{p-1}(\nabla u_{k}) \nabla_{\xi}F(\nabla u_{k}),\nabla\varphi \rangle dx= \lambda_{k}\int_\Omega |u_{k}|^{p-2} u_{k} \varphi \, dx
\end{equation}
for any test function $\varphi\in W_{0}^{1,p}(\Omega)$.
Since 
\[
\lambda_k=\int_\Omega F^{p}(\nabla u_k)\, dx,
\]
and being $\lambda_{k}$ a convergent sequence,  up to a subsequence we have that there exists a function $u\in W_{0}^{1,p}(\Omega)$ such that $u_{k}\rightarrow u$ strongly in $L^p(\Omega)$ and $\nabla u_{k}\rightharpoonup \nabla u$ weakly in $L^p(\Omega)$. Our aim is to prove that $u$ is an eigenfunction relative to $\lambda$.

Choosing $\varphi=u_{k}-u$ as test function in the equation solved by $u_{k}$, we have
\begin{equation*}
\begin{split}
&\int_\Omega \langle F^{p-1}(\nabla u_{k })  \nabla_\xi F(\nabla u_{k })-F^{p-1}(\nabla u)  \nabla_\xi F (\nabla u),\nabla  (u_{k}-u)\rangle dx\\
&=\lambda_{k }\int_\Omega |u_{k }|^{p-2}u_{k }(u_{k }-u)\ dx -\int_\Omega F^{p-1}(\nabla u)\langle   \nabla_\xi F  (\nabla u) ,\nabla (u_{k }-u)\rangle dx.
\end{split}
\end{equation*}
By the strong convergence of $u_{k }$ and the weak one of $\nabla u_{k }$, the right-hand side of the above identity goes to zero as $k$ diverges. Hence
\begin{equation*}
\lim_{k\rightarrow \infty}\int_\Omega \langle F^{p-1}(\nabla  u_{k })   \nabla_\xi F (\nabla  u_{k })-F^{p-1}(\nabla  u)  \nabla_\xi F  (\nabla  u), \nabla  (u_{k }-u)\rangle dx=0
\end{equation*}
By nowadays standard arguments, this limit implies the strong convergence of the gradient, hence we can pass to the limit under the integral sign in \eqref{limeq} to obtain
\begin{equation*}
\int_\Omega \langle F^{p-1}(\nabla u) F_\xi(\nabla u),\nabla \varphi\rangle\, dx=\lambda\int_\Omega |u|^{p-2}u\varphi\, dx
\end{equation*}
This shows that $\lambda$ is an eigenvalue and the proof is completed.
\end{proof}
Finally, we list some properties of the eigenfunctions, well-known in the Euclidean case (see for example \cite{L2,aft98}). Recall that a nodal domain of an eigenfunction $u$ is a connected component of $\{u>0\}$ or $\{u<0\}$.
\begin{prop}Let $p>1$, and let $\Omega$ be a bounded open set in $\R^{n}$. Then the following facts hold.
\begin{enumerate} 
\item[(i)]  Any eigenfunction of $-\Qp$ has only a finite number of nodal domains.
\item[(ii)] Let $\lambda$ be an eigenvalue of $-\Qp$, and $u$ be a corresponding eigenfunction. The following estimate holds:
 \begin{equation}\label{L-infty}
 \|u\|_{L^\infty(\Omega)} \le C_{n,p,F}\lambda^\frac{n}{p}\|u\|_{L^{1}(\Omega)}
 \end{equation} 
 where $C_{n,p,F}$ is a constant depending only on $n$, $p$ and $F$.
 \item[(iii)] All the eigenfunctions of \eqref{operat} are in $C^{1,\alpha}(\Omega)$, for some $\alpha\in(0,1)$.
\end{enumerate} 
\end{prop}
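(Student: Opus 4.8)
The plan is to prove the three statements in the order (ii), (iii), (i), each one using the previous ones.

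For (ii), the plan is a Moser iteration. Fix $\beta\ge 1$ and test \eqref{eig-def} with $\varphi=u\,\min(|u|,k)^{p(\beta-1)}$, letting $k\to+\infty$ by monotone convergence (the truncation is needed because a priori $u$ is only known to lie in $W_0^{1,p}(\Omega)\cap L^p(\Omega)$). Using Euler's identity $\langle\nabla_{\xi}F(\xi),\xi\rangle=F(\xi)$ together with $F(\xi)\ge a|\xi|$ from \eqref{eq:lin}, one is led to
\[
\frac{a^p}{\beta^p}\,\bigl(1+p(\beta-1)\bigr)\int_\Omega\bigl|\nabla(|u|^{\beta})\bigr|^p\,dx\ \le\ \lambda\int_\Omega|u|^{p\beta}\,dx .
\]
Coupling this with the Sobolev inequality applied to $|u|^{\beta}\in W_0^{1,p}(\Omega)$ (extended by zero), with exponent $\chi=\tfrac{n}{n-p}>1$ when $p<n$, and iterating over $\beta=\chi^{m}$, $m\ge 0$, yields
\[
\|u\|_{L^\infty(\Omega)}\ \le\ C\,\lambda^{\,n/p^{2}}\,\|u\|_{L^p(\Omega)},
\]
the exponent $n/p^{2}$ being the sum of the convergent series $\sum_{m\ge 0}(p\chi^{m})^{-1}$. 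The elementary interpolation $\|u\|_{L^p(\Omega)}^{p}\le\|u\|_{L^\infty(\Omega)}^{p-1}\|u\|_{L^1(\Omega)}$ then absorbs the $L^p$ norm on the right and produces \eqref{L-infty} with the power $\lambda^{n/p}$; this is the only power compatible with the scaling $u\mapsto u(\cdot/t)$ of \eqref{eigpb}. The borderline cases $p=n$ (take any $\chi>1$) and $p>n$ (use Morrey's embedding) require only routine modifications.

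For (iii), once (ii) guarantees $u\in L^\infty(\Omega)$, the right-hand side $\lambda|u|^{p-2}u$ of \eqref{eigpb} belongs to $L^\infty(\Omega)$; since the field $\xi\mapsto\tfrac1p\nabla_{\xi}[F^p](\xi)$ is strictly monotone, has $p$-growth, and satisfies the ellipticity bound displayed right after \eqref{strong}, the interior $C^{1,\alpha}$-regularity theory for quasilinear operators of $p$-Laplacian type (Tolksdorf, DiBenedetto, Lieberman, and its anisotropic counterparts, cf. \cite{bfk,dpgpota} and the references therein) applies and gives $u\in C^{1,\alpha}_{\mathrm{loc}}(\Omega)$ for some $\alpha\in(0,1)$. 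For (i), continuity of $u$ makes $\{u>0\}$ and $\{u<0\}$ open, so the nodal domains are genuine connected open sets; if $\Omega_i$ is one of them, the function that equals $u$ on $\Omega_i$ and $0$ elsewhere lies in $W_0^{1,p}(\Omega_i)$, and testing \eqref{eig-def} with it (again by Euler's identity) shows that its Rayleigh quotient on $\Omega_i$ equals $\lambda$, so $\lambda_1(p,\Omega_i)\le\lambda$. The Faber--Krahn inequality \eqref{faberk} then forces $|\Omega_i|\ge\kappa_n\,\lambda_1(p,\mathcal W)^{n/p}\,\lambda^{-n/p}$; since the nodal domains are pairwise disjoint subsets of $\Omega$, their number does not exceed $|\Omega|\,\lambda^{n/p}\big/\bigl(\kappa_n\,\lambda_1(p,\mathcal W)^{n/p}\bigr)<+\infty$.

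The crux is part (ii): one must follow the constants through the Moser scheme carefully enough to land on the sharp exponent $\lambda^{n/p}$ in \eqref{L-infty} (a crude iteration only delivers some unspecified power of $\lambda$) and justify the limit in the truncated test functions. Granted (ii), both (iii) and (i) are essentially bookkeeping on top of it and of the Faber--Krahn inequality.
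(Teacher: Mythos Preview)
Your proof is correct, and parts (i) and (iii) match the paper's argument essentially verbatim (Faber--Krahn lower bound on the measure of each nodal domain for (i); standard quasilinear $C^{1,\alpha}$ regularity once $u\in L^\infty$ for (iii)). The one genuine difference is in (ii): the paper does \emph{not} run a Moser iteration. Instead it tests \eqref{eig-def} with the De Giorgi truncation $\varphi=(u-k)_+$, obtains
\[
\int_{A_k}F^p(\nabla u)\,dx=\lambda\int_{A_k}|u|^{p-2}u(u-k)\,dx,\qquad A_k=\{u>k\},
\]
splits the right-hand side via $a^{p-1}\le 2^{p-1}(a-k)^{p-1}+2^{p-1}k^{p-1}$, and combines Poincar\'e with $F(\xi)\ge a|\xi|$ to reach an inequality of the form
\[
\int_{A_k}(u-k)\,dx\le \tilde C_{n,p,F}\,\lambda^{1/(p-1)}k\,|A_k|^{1+\frac{p}{n(p-1)}},
\]
valid for $k$ large. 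The $L^\infty$ bound then follows by quoting the level-set lemma of Ladyzhenskaya--Ural'tseva \cite[Lemma~5.1]{LU}. Your Moser scheme is a perfectly legitimate alternative: it is more self-contained (no black-box lemma) and makes the exponent $\lambda^{n/p}$ emerge transparently from the geometric series $\sum_m(p\chi^m)^{-1}$ followed by the $L^p$--$L^1$ interpolation. The paper's route is shorter on the page because the iteration is outsourced to \cite{LU}, but the analytic content is the same. Your ordering (ii)$\Rightarrow$(iii)$\Rightarrow$(i) is also slightly cleaner than the paper's, since establishing continuity of $u$ first makes the nodal domains manifestly open and the membership $u\chi_{\Omega_i}\in W_0^{1,p}(\Omega_i)$ immediate.
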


\begin{proof}
Let $\lambda$ be an eigenvalue of $-\Qp$, and $u$ a corresponding eigenfunction.

In order to prove $(i)$, let us denote by $\Omega^{+}_{j}$ a connected component of the set $\Omega^+:=\{u>0\}$.
Being $\lambda=\lambda_{1}(\Omega^{+}_{j})$, then by \eqref{faberk} 
\[
|\Omega^{+}_{j}| \ge C_{n,p,F} \lambda^{-\frac np}. 
\]
Then, the thesis follows observing that
\[
|\Omega|\ge \sum_{j}|\Omega^{+}_{j}|\ge C_{n,p,F}\lambda^{-\frac np}\sum_{j}1.
\]

In order to prove $(ii)$, let $k>0$, and choose $\varphi(x)=\max\{u(x) -k, 0\}$ as test function in \eqref{eig-def}. Then
\begin{equation}
\label{lindqvist2.7}
\int_{A_k}F^p(\nabla u)\ dx = \lambda \int_{A_k} |u|^{p-2} u (u-k) \ dx
\end{equation}
where $A_k=\{ x \in\Omega \colon u(x)>k\}$. Being $k |A_k|\leq ||u||_{L^{1}(\Omega)}$, then $|A_k|\rightarrow 0$ as $k\rightarrow\infty$. By the inequality $a^{p-1}\le 2^{p-1}(a-k)^{p-1}+2^{p-1}k^{p-1}$, we have
\begin{equation}
\label{lindqvist2.8}
\int_{A_k} |u|^{p-2} u (u-k) \ dx\le2^{p-1}\int_{A_k}(u-k)^{p}\ dx+2^{p-1}k^{p-1}\int_{A_k}(u-k)\ dx.
\end{equation}
By Poincar\'e inequality and property \eqref{eq:lin}, then \eqref{lindqvist2.7} and \eqref{lindqvist2.8} give that
\[(1-\lambda C_{n,p,F}|A_k|^{p/n})\int_{A_k}(u-k)^p\ dx\le\lambda |A_k|^{p/n} C_{n,p,F}k^{p-1}\int_{A_k}(u-k) \ dx.\]
By choosing $k$ sufficiently large, the H\"older inequality implies
\[
\int_{A_k}(u-k) \ dx \le \tilde C_{n,p,F}\lambda^\frac{1}{p-1}k|A_k|^{1+\frac{p}{n(p-1)}}.
\]  
This estimate allows to apply \cite[Lemma 5.1, p. 71]{LU} in order to get the boundedness of $\esssup u$. Similar argument gives that $\essinf u$ is bounded.

Since \eqref{L-infty} holds, by standard elliptic regularity theory (see e.g. \cite{LU}) the eigenfunction is $C^{1,\alpha}(\Omega)$.
\end{proof}

\section{The second Dirichlet eigenvalue of \texorpdfstring{$-\mathcal Q_{p}$}{TEXT} }

If $\Omega$ is a bounded domain, Proposition \ref{isoprop} assures that the first eigenvalue $\lambda_1(p,\Omega)$ of \eqref{operat} is isolated. This suggests  the following definition.
\begin{defn}
Let $\Omega$ be a bounded open set of $\R^{n}$. Then the second eigenvalue of $-\Qp$ is
\begin{equation*}
\lambda_2(p,\Omega):=\begin{cases}
\min\{\lambda > \lambda_1(p,\Omega)\colon \lambda\ \text{is an eigenvalue}\}& \text{if } \lambda_1(p,\Omega) \text{ is simple}\\
\lambda_1(p,\Omega) & \text{otherwise}.
\end{cases}
\end{equation*}
\end{defn}
\begin{rem}
If $\Omega $ is connected, by theorems \ref{th_prop1} and \ref{th_prop2} we deduce the following characterization of the second eigenvalue:
\begin{equation}
\label{char2}
\lambda_2(p,\Omega)=\min\{\lambda : \lambda \text{ admits a sign-changing eigenfunction}\}.
\end{equation}
\end{rem}
We point out  that in \cite{thfr} it is proved that in a bounded open set it holds
\begin{equation}
\label{caratt2}
\lambda_{2}(p,\Omega)=\tilde\lambda_{2}(p,\Omega)=\inf_{\gamma\in\Gamma_\Omega(u_1,-u_1)}\max_{u\in\gamma([0,1])}\int_\Omega F^p(\nabla u(x)) \ dx
\end{equation}
 where $\tilde\lambda_{2}(p,\Omega)$ is given  in Proposition \ref{infiniti}, and  
 \begin{equation*}
 \Gamma_\Omega(u,v)=\left\{\gamma:[0,1]\rightarrow M \colon \gamma \text{ is continuous and } \gamma(0)=u,\ \gamma(1)=v\right\},
 \end{equation*}
 with $M$ as in \eqref{emme}. As immediate consequence of \eqref{caratt2} we get  
\begin{prop}
\label{monotone}
If $\Omega_1\subseteq\Omega_2\subseteq\Omega$, then $\lambda_2(p,\Omega_{1})\geq\lambda_2(p,\Omega_{2})$.
\end{prop}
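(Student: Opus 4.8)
The plan is to deduce Proposition \ref{monotone} directly from the min-max characterization \eqref{caratt2}, exploiting the monotonicity of the class of admissible paths under domain inclusion. The key point is that if $\Omega_1\subseteq\Omega_2$, then $W_0^{1,p}(\Omega_1)$ embeds naturally into $W_0^{1,p}(\Omega_2)$ by extension by zero, and this extension preserves both $\int F^p(\nabla u)\,dx$ and $\int|u|^p\,dx$; hence it maps the sphere $M$ associated to $\Omega_1$ into the sphere $M$ associated to $\Omega_2$, and it sends admissible paths to admissible paths.

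First I would fix a first eigenfunction $u_1$ of $\Omega_1$, normalized in $L^p$, and consider its zero-extension $\tilde u_1\in W_0^{1,p}(\Omega_2)$, which lies in the sphere $M$ relative to $\Omega_2$. The subtlety is that $\tilde u_1$ need not be a first eigenfunction of $\Omega_2$, so the two characterizations \eqref{caratt2} are taken over path classes $\Gamma_{\Omega_1}(u_1,-u_1)$ and $\Gamma_{\Omega_2}(v_1,-v_1)$ with different endpoints. I would handle this by first using \eqref{caratt2} in the slightly more flexible form valid for any pair of antipodal points that can be joined to $u_1,-u_1$; more cleanly, the value $\tilde\lambda_2(p,\Omega_2)$ from Proposition \ref{infiniti} is intrinsically defined via odd continuous maps from $\mathbb S^{n-1}$, and the zero-extension composed with any $f\in\mathcal C_n$ for $\Omega_1$ yields an element of $\mathcal C_n$ for $\Omega_2$ with the same maximal energy. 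Since $\tilde\lambda_2=\lambda_2$ on bounded open sets by \eqref{caratt2}, taking the infimum over $f$ gives
\begin{equation*}
\lambda_2(p,\Omega_2)=\tilde\lambda_2(p,\Omega_2)=\inf_{f\in\mathcal C_n(\Omega_2)}\max_{\omega}\int_{\Omega_2}F^p(\nabla f_\omega)\,dx\le \inf_{f\in\mathcal C_n(\Omega_1)}\max_{\omega}\int_{\Omega_1}F^p(\nabla f_\omega)\,dx=\lambda_2(p,\Omega_1),
\end{equation*}
where the inequality is because the zero-extension injects $\mathcal C_n(\Omega_1)$ into $\mathcal C_n(\Omega_2)$ energy-preservingly, so the infimum over the larger class is no larger. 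The reduction $\Omega_2\subseteq\Omega$ plays no role beyond ensuring boundedness, so it can be dropped or kept as stated.

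The main obstacle is purely a matter of bookkeeping: checking that the zero-extension of an odd continuous map $\mathbb S^{n-1}\to M(\Omega_1)$ is still continuous as a map into $M(\Omega_2)$ (continuity in the $W^{1,p}$ norm is immediate since extension by zero is a linear isometry onto its image), and that oddness is preserved (trivial). One should also note that $M(\Omega_1)$ is nonempty and the infimum in Proposition \ref{infiniti} for $k=2$ is the same object appearing in \eqref{caratt2}, which is exactly the content of \eqref{caratt2} as quoted from \cite{thfr}. No delicate analysis is required; the entire proof is the observation that the admissible competitor classes are nested.

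\begin{proof}
By \eqref{caratt2} and Proposition \ref{infiniti}, for any bounded open set $U$ we have
\begin{equation*}
\lambda_2(p,U)=\tilde\lambda_2(p,U)=\inf_{f\in\mathcal C_n}\max_{\omega\in\mathbb S^{n-1}}\int_U F^p(\nabla f_\omega)\,dx,
\end{equation*}
where $\mathcal C_n=\mathcal C_n(U)$ denotes the odd continuous maps from $\mathbb S^{n-1}$ to $M=\{u\in W_0^{1,p}(U)\colon \int_U|u|^p\,dx=1\}$. Assume $\Omega_1\subseteq\Omega_2$. The zero-extension operator $E\colon W_0^{1,p}(\Omega_1)\to W_0^{1,p}(\Omega_2)$ is a linear isometry satisfying $\int_{\Omega_2}F^p(\nabla Eu)\,dx=\int_{\Omega_1}F^p(\nabla u)\,dx$ and $\int_{\Omega_2}|Eu|^p\,dx=\int_{\Omega_1}|u|^p\,dx$. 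Hence, given $f\in\mathcal C_n(\Omega_1)$, the map $\omega\mapsto E f_\omega$ is odd, continuous (being the composition of $f$ with the bounded linear map $E$), and takes values in $M(\Omega_2)$; thus $E\circ f\in\mathcal C_n(\Omega_2)$ and
\begin{equation*}
\max_{\omega\in\mathbb S^{n-1}}\int_{\Omega_2}F^p(\nabla (E f)_\omega)\,dx=\max_{\omega\in\mathbb S^{n-1}}\int_{\Omega_1}F^p(\nabla f_\omega)\,dx.
\end{equation*}
Taking the infimum over $f\in\mathcal C_n(\Omega_1)$ and comparing with the infimum over the larger class $\mathcal C_n(\Omega_2)$ yields
\begin{equation*}
\lambda_2(p,\Omega_2)=\inf_{g\in\mathcal C_n(\Omega_2)}\max_{\omega}\int_{\Omega_2}F^p(\nabla g_\omega)\,dx\le \inf_{f\in\mathcal C_n(\Omega_1)}\max_{\omega}\int_{\Omega_1}F^p(\nabla f_\omega)\,dx=\lambda_2(p,\Omega_1),
\end{equation*}
which is the claim.
\end{proof}
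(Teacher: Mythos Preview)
Your argument is correct and is exactly the ``immediate consequence of \eqref{caratt2}'' that the paper invokes without further detail: both reduce to the observation that zero-extension embeds the competitor class for $\Omega_1$ into that for $\Omega_2$ while preserving the energy, so the infimum over the larger class is no larger. Your choice to work with the odd-map formulation $\tilde\lambda_2$ from Proposition~\ref{infiniti} rather than the path formulation $\Gamma_{\Omega}(u_1,-u_1)$ is a clean way to sidestep the mismatch of endpoints, and is implicitly available via the identity $\lambda_2=\tilde\lambda_2$ in \eqref{caratt2}.
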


By adapting the method contained in \cite{CDG1,CDG2}, it is possible to prove the following result.

\begin{prop}
Let $\Omega$ be a bounded domain in $\R^{n}$.
The eigenfunctions associated to $\lambda_2(p,\Omega)$ admit exactly two nodal domains.
\end{prop}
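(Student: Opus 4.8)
The statement to prove is that if $\Omega$ is a bounded domain, then every eigenfunction associated to $\lambda_2(p,\Omega)$ has exactly two nodal domains. The plan is to combine two ingredients: first, the lower bound on the number of nodal domains coming from the variational (mountain-pass) characterization \eqref{caratt2}; second, the upper bound coming from the fact that on a nodal domain the eigenfunction is a first eigenfunction, together with the monotonicity and isolation properties already established for $\lambda_1$ and $\lambda_2$.

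\medskip

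\textbf{Step 1: at least two nodal domains.} By the characterization \eqref{char2}, any eigenfunction $u$ associated to $\lambda_2(p,\Omega)$ is sign-changing (recall $\Omega$ is connected), so it has at least one positive nodal domain $\Omega^+_1$ and at least one negative nodal domain $\Omega^-_1$; hence at least two.

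\medskip

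\textbf{Step 2: setting up the contradiction.} Suppose $u$ has at least three nodal domains, say $D_1, D_2, D_3$ (pairwise disjoint, connected, open). On each $D_i$, the restriction $u|_{D_i}$ (extended by $0$) lies in $W_0^{1,p}(D_i)$ and, testing \eqref{eig-def} against $u\cdot\mathbf 1_{D_i}$, one finds that $u|_{D_i}$ is an eigenfunction of $-\Qp$ on $D_i$ of constant sign, so by Theorem \ref{th_prop2} it is a \emph{first} eigenfunction of $D_i$ and $\lambda_1(p,D_i)=\lambda_2(p,\Omega)$ for each $i$.

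\medskip

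\textbf{Step 3: constructing a competitor path to contradict \eqref{caratt2}.} This is the heart of the argument, following the method of \cite{CDG1,CDG2}. Normalize $v_i := u|_{D_i}/\|u|_{D_i}\|_{L^p}$, so each $v_i\in M$ with $\int_\Omega F^p(\nabla v_i)\,dx = \lambda_1(p,D_i)=\lambda_2(p,\Omega)$. Using three nodal domains, one builds a continuous path in $M$ joining $u_1$ (the first eigenfunction, positive) to $-u_1$ that stays strictly below the level $\lambda_2(p,\Omega)$, contradicting \eqref{caratt2}. The construction: using the three functions $v_1, v_2, v_3$ with disjoint supports, one can form a two-parameter family of normalized functions of the form $(\alpha_1 v_1 + \alpha_2 v_2 + \alpha_3 v_3)/\|\cdot\|_{L^p}$ with $\alpha_i$ of varying signs; because the supports are disjoint, $\int F^p(\nabla(\textstyle\sum\alpha_i v_i))\,dx = \sum |\alpha_i|^p \lambda_2(p,\Omega)$ while $\int|\sum\alpha_i v_i|^p = \sum|\alpha_i|^p$, so every such normalized combination sits \emph{exactly} at level $\lambda_2(p,\Omega)$, not below. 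To get strictly below, one deforms: connect $u_1$ to $v_1$ (both positive first eigenfunctions on nested-type domains — here one uses that $u_1>0$ on $\Omega\supseteq D_1$ and a convex-combination/homotopy argument shows the Rayleigh quotient along the segment from $u_1$ to a suitable positive function supported near $D_1$ stays below $\max\{\lambda_1(p,\Omega),\lambda_2(p,\Omega)\}=\lambda_2(p,\Omega)$, with strict inequality except possibly at endpoints), then route through the sphere in $\mathrm{span}\{v_1,v_2,v_3\}$ to reach $-v_1$, then connect $-v_1$ back to $-u_1$. The extra third domain $D_3$ provides the room to pass from $v_1$ to $-v_1$ through an antipodal-type path in a $2$-sphere of combinations while keeping the quotient controlled; a careful bookkeeping shows the maximum along the whole path is $<\lambda_2(p,\Omega)$, contradicting the $\inf\max$ in \eqref{caratt2}.

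\medskip

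\textbf{Main obstacle.} The delicate point is Step 3: producing a path from $u_1$ to $-u_1$ whose maximal Rayleigh quotient is \emph{strictly} less than $\lambda_2(p,\Omega)$. The naive combinations of $v_1,v_2,v_3$ sit exactly at level $\lambda_2$, so one must exploit the presence of the third nodal domain together with the strict monotonicity/isolation of eigenvalues (Proposition \ref{isoprop} and Proposition \ref{monotone}) to perform a genuine lowering deformation — e.g. replacing one of the $v_i$ by a first eigenfunction of a strictly larger subdomain of $\Omega$, which lowers its Rayleigh quotient strictly. Making this deformation continuous in $M$ and verifying the maximum stays below $\lambda_2(p,\Omega)$ uniformly along the path is where the technical work of \cite{CDG1,CDG2} is invoked; the anisotropic setting changes nothing essential here since only the $1$-homogeneity of $F$ and the variational structure are used.
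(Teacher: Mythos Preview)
Your overall strategy is the same as the paper's --- both follow the Cuesta--De Figueiredo--Gossez scheme: assume three nodal domains and derive a contradiction with the mountain-pass characterization \eqref{caratt2} by producing a path from $u_1$ to $-u_1$ whose maximal level is strictly below $\lambda_2(p,\Omega)$. Your Step~1 and Step~2 are fine and match the paper.

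The gap is in Step~3 and your ``Main obstacle'' paragraph. You correctly note that linear combinations of $v_1,v_2,v_3$ supported on the nodal domains themselves sit \emph{exactly} at level $\lambda_2$, and you correctly say that one must enlarge some subdomain to drop strictly below. But the decisive point you do not supply is \emph{why} such an enlargement can be done while keeping the supports disjoint. The paper isolates this as a separate claim: given nodal domains $\Omega_1,\Omega_3\subset\{u_2>0\}$ and $\Omega_2\subset\{u_2<0\}$, there is a connected open set $\widetilde{\Omega}_2$ with $\Omega_2\subsetneq\widetilde{\Omega}_2\subset\Omega$ and $\widetilde{\Omega}_2$ disjoint from at least one of $\Omega_1,\Omega_3$. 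The proof of this claim uses the Hopf boundary lemma for $-\Qp$ (from \cite{CT}) to rule out the possibility that $\partial\Omega_2$ is entirely shared with both $\Omega_1$ and $\Omega_3$; this is the piece of input you are missing, and it is precisely where the third nodal domain earns its keep. Once $\widetilde{\Omega}_2$ exists, strict monotonicity gives $\lambda_1(p,\widetilde{\Omega}_2)<\lambda_2(p,\Omega)$, and since $\widetilde{\Omega}_2$ is still disjoint from (say) $\Omega_1$, one can also enlarge $\Omega_1$ slightly to $\widetilde{\Omega}_1$ with $\lambda_1(p,\widetilde{\Omega}_1)<\lambda_2(p,\Omega)$. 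The path is then built from the \emph{two} first eigenfunctions on $\widetilde{\Omega}_1$ and $\widetilde{\widetilde{\Omega}}_2$, not from the three $v_i$'s on a ``$2$-sphere'' as you sketch; your picture with three functions is not how the construction in \cite{CDG1,CDG2} actually runs, and it obscures the fact that what one really needs is just two disjoint subdomains with first eigenvalue strictly below $\lambda_2$.

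In short: right approach, but the Hopf-lemma enlargement step (the Claim in the paper) is the crux, and your sketch does not identify it.
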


\begin{proof}
We will proceed as in the proof of \cite[Th. 2.1]{CDG2}. In such a case, $\lambda_{2}(p,\Omega)$ is characterized as in \eqref{char2}. 
Then any eigenfunction $u_{2}$ has to change sign, and it admits at least two nodal domains $\Omega_1\subset\Omega^{+}$ and $\Omega_2\subset\Omega^{-}$. Let us assume, by contradiction, the existence of a third nodal domain $\Omega_3$ and let us suppose, without loss of generality, that $\Omega_{3}\subset\Omega^{+}$. 

\noindent \textbf{Claim.} There exists a connected open set $\widetilde{\Omega}_{2}$, with $\Omega_2\subset\widetilde{\Omega}_{2}\subset\Omega$ such that $\widetilde{\Omega}_{2}\cap \Omega_1=\emptyset$ or $\widetilde{\Omega}_{2}\cap \Omega_3=\emptyset$. 

The proof of the claim follows line by line as in \cite[Th. 2.1]{CDG2}. One of the main tool is the Hopf maximum principle, that for the operator $-\Qp$ is proved for example in \cite[Th. 2.1]{CT}. 

Now, without loss of generality, we assume that $\widetilde{\Omega}_{2}$ is disjoint of $\Omega_1$ and from this fact a contradiction is derived.
 
By the fact that $u_2$ does not change sign on the nodal domains and by Proposition \ref{monotone}, we have that $\lambda_1(p,\Omega_1)=\lambda_2(p,\Omega)$ and that $\lambda_1(p,\widetilde{\Omega}_2)<\lambda_1(p,\Omega_2)=\lambda_2(p,\Omega)$. Now, we may construct the disjoint sets $\widetilde{\widetilde{\Omega}}_2$ and $\widetilde{\Omega}_1$   such that $\Omega_2\subset\widetilde{\widetilde{\Omega}}_2 \subset\widetilde{\Omega}_2$ and $\Omega_1\subset\widetilde{\Omega}_1$, in order to have
\[
 \lambda_{1}(p,\widetilde{\Omega}_{1})<\lambda_{2}(p,\Omega),\qquad\lambda_{1}(p,\widetilde{\widetilde{\Omega}}_2)<\lambda_{2}(p,\Omega).
\]

Now let $v_1$ and $v_2$ be the extension by zero outside $\widetilde{\Omega}_1$ and $\widetilde{\widetilde{\Omega}}_2$, respectively, of the positive normalized eigenfunctions associated to $\lambda_1(p,\widetilde{\Omega}_1)$ and $\lambda(p,\widetilde{\widetilde{\Omega}}_2)$. Hence we easily verify that the function $v=v_1-v_2$ belongs to $W^{1,p}_0(\Omega)$, it changes sign and satisfies
\begin{equation*}
\frac{\int_\Omega F^p(\nabla v_+)\ dx}{\int_\Omega  v_+^p\ dx}<\lambda_2(p,\Omega),\quad \frac{\int_\Omega F^p(\nabla v_-)\ dx}{\int_\Omega  v_-^p\ dx}<\lambda_2(p,\Omega).
\end{equation*}

The final aim is to construct a path $\gamma([0,1])$ such 
\[
\max_{u\in\gamma([0,1])}\int_\Omega F^p(\nabla u(x))\, dx<
\lambda_2(p,\Omega),
\]
obtaining a contradiction from \eqref{caratt2}. The construction of this path follows adapting the method contained in \cite{CDG1,CDG2}.

\end{proof}

\begin{rem} 
\label{Wulff}
In order to better understand the behavior of $\lambda_{1}(p,\Omega)$ and $\lambda_{2}(p,\Omega)$ on disconnected sets, an meaningful model is given when 
\[
\Omega=\mathcal W_{r_1}\cup \mathcal W_{r_2},\text{ with }r_1,r_2>0\text{ and }\mathcal W_{r_{1}}\cap \mathcal W_{r_{2}}=\emptyset.
\] 
We distinguish two cases.
\begin{description}[leftmargin=*]
\item[Case $r_{1}<r_{2}$.] We have
\[
\lambda_{1}(p,\Omega)=\lambda_{1}(p,\mathcal W_{r_{2}}).
\] 
Hence $\lambda_{1}(p,\Omega)$ is simple, and any eigenfunction is identically zero on $\mathcal W_{1}$ and has constant sign in $\mathcal W_{2}$. Moreover,
\[
\lambda_{2}(p,\Omega)=\min\{\lambda_{1}(p,\mathcal W_{r_{1}}),\lambda_{2}(p,\mathcal W_{r_2})\}.
\]
Hence, if $r_{1}$ is not too small, then the second eigenvalue is $\lambda_{1}(\mathcal W_{r_{1}})$, and the second eigenfunctions of $\Omega$ coincide with the first eigenfunctions of $\mathcal W_{r_{1}}$, that do not change sign in $\mathcal W_{r_{1}}$, and vanish on $\mathcal W_{r_{2}}$.
\item[Case $r_{1}=r_{2}$.] We have
\[
\lambda_{1}(p,\Omega)=\lambda_{1}(p,\mathcal W_{r_{i}}),\quad i=1,2.
\]
The first eigenvalue $\lambda_{1}(p,\Omega)$ is not simple: choosing, for example, the function  $U=u_{1}\chi_{\mathcal W_{r_{1}}}-u_{2}\chi_{\mathcal W_{r_{2}}}$, where $u_{i}$, $i=1,2$, is the first normalized eigenfunction of $\lambda_{1}(p,\mathcal W_{r_{i}})$, and $V=u_{1}\chi_{\mathcal W_{r_{1}}}$, then $U$ and $V$ are two nonproportional eigenfunctions relative to $\lambda_{1}(p,\Omega)$. Hence, in this case, by definition,
\[
\lambda_{2}(p,\Omega)=\lambda_{1}(p,\Omega)=\lambda_{1}\left(p,\mathcal W_{r_{i}}\right).
\]
\end{description}
\end{rem}

In order to prove the Hong-Krahn-Szego inequality, we need the following key lemma.
\begin{prop}
\label{keylemma}
Let $\Omega$ be an open bounded set of $\R^{n}$. Then there exists two disjoint domains  $\Omega_1,\Omega_2$ of $\Omega$ such that 
\begin{equation*}
\lambda_{2}(p,\Omega)=\max\{\lambda_{1}(p,\Omega_1),\lambda_{1}(p,\Omega_{2})\}.
\end{equation*}
\end{prop}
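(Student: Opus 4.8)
The strategy is to split into the two cases dictated by the definition of $\lambda_2$. If $\lambda_1(p,\Omega)$ is \emph{not} simple, then $\lambda_2(p,\Omega)=\lambda_1(p,\Omega)$, and one must exhibit two disjoint subdomains each carrying $\lambda_1(p,\Omega)$ as first eigenvalue. The failure of simplicity forces $\Omega$ to be disconnected: by Theorem \ref{th_prop1} simplicity holds on connected sets, so there are at least two connected components $\Omega',\Omega''$ with $\lambda_1(p,\Omega')=\lambda_1(p,\Omega'')=\lambda_1(p,\Omega)$ (indeed, a nonsimple first eigenpair must come from two components attaining the same minimal value in \eqref{rayleigh}, because Theorem \ref{th_prop2} says a constant-sign eigenfunction is a first eigenfunction of a single component, and the general first eigenfunction splits over components). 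Taking $\Omega_1=\Omega'$, $\Omega_2=\Omega''$ settles this case.

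The substantive case is when $\lambda_1(p,\Omega)$ is simple, so $\lambda_2(p,\Omega)$ is a genuine second eigenvalue with a sign-changing eigenfunction $u_2$ (this is \eqref{char2} in the connected case, and more generally $\lambda_2$ is an eigenvalue with an eigenfunction that cannot have constant sign on all of $\Omega$ — if it did, Theorem \ref{th_prop2} would make it a first eigenfunction of some component, and one checks this is incompatible with being strictly larger than $\lambda_1(p,\Omega)$ when the latter is simple). Given such $u_2$, set $\Omega^+=\{u_2>0\}$ and $\Omega^-=\{u_2<0\}$, which are nonempty, open, and disjoint. The key observation is that $u_2$ restricted to $\Omega^+$ is, after extension by zero, an admissible competitor in \eqref{rayleigh} for $\lambda_1(p,\Omega^+)$ with Rayleigh quotient exactly $\lambda_2(p,\Omega)$; indeed testing \eqref{eig-def} with $\varphi=u_2^+=\max\{u_2,0\}$ (using that $u_2\in C^{1,\alpha}$ so $u_2^+\in W_0^{1,p}(\Omega^+)$ and $F^p$ is $1$-homogeneous so $F^p(\nabla u_2)=F^p(\nabla u_2^+)$ a.e. on $\Omega^+$) gives $\int_{\Omega^+}F^p(\nabla u_2^+)\,dx=\lambda_2(p,\Omega)\int_{\Omega^+}(u_2^+)^p\,dx$. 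Hence $\lambda_1(p,\Omega^+)\le\lambda_2(p,\Omega)$, and $u_2^+$ being a constant-sign function with Rayleigh quotient equal to the first eigenvalue of each component of $\Omega^+$ it lives on, one deduces (Theorem \ref{th_prop2}) that $\lambda_2(p,\Omega)=\lambda_1(p,\Omega_1)$ for some connected component $\Omega_1$ of $\Omega^+$; symmetrically $\lambda_2(p,\Omega)=\lambda_1(p,\Omega_2)$ for a component $\Omega_2$ of $\Omega^-$. Since $\Omega_1\subset\Omega^+$ and $\Omega_2\subset\Omega^-$ are disjoint domains of $\Omega$ with $\lambda_1(p,\Omega_1)=\lambda_1(p,\Omega_2)=\lambda_2(p,\Omega)$, the required equality $\lambda_2(p,\Omega)=\max\{\lambda_1(p,\Omega_1),\lambda_1(p,\Omega_2)\}$ follows.

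The one subtlety I would be careful about — and which I expect to be the main obstacle — is the claim that $u_2^+$ (and $u_2^-$) actually \emph{attains} the first eigenvalue on at least one nodal component, i.e.\ that the Rayleigh quotient $\lambda_2(p,\Omega)$ is not strictly bigger than $\lambda_1$ of every component of $\Omega^+$. This is precisely where Theorem \ref{th_prop2} does the work: $u_2$ restricted to any single nodal component $\Omega_j\subset\Omega^+$ is a nonnegative eigenfunction of \eqref{eigpb} on $\Omega_j$ with eigenvalue $\lambda_2(p,\Omega)$ (it solves the equation there and vanishes on $\partial\Omega_j$ since $u_2$ vanishes on the nodal boundary, using $C^{1,\alpha}$ regularity), so by Theorem \ref{th_prop2} it is a first eigenfunction of $\Omega_j$ and $\lambda_2(p,\Omega)=\lambda_1(p,\Omega_j)$ for \emph{every} such component — in fact stronger than needed. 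So one may simply pick any component $\Omega_1$ of $\Omega^+$ and any component $\Omega_2$ of $\Omega^-$. The remaining care is purely the bookkeeping that extension by zero across a nodal boundary keeps functions in $W_0^{1,p}$, which is standard given $u_2\in W_0^{1,p}(\Omega)\cap C^{1,\alpha}(\Omega)$.
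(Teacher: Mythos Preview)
Your sign-changing case is handled correctly and matches the paper's argument: restrict $u_2$ to any nodal component and invoke Theorem~\ref{th_prop2}. Your nonsimple case is also fine.

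The gap is in your ``$\lambda_1$ simple'' case. You assert that a second eigenfunction $u_2$ \emph{cannot have constant sign}, because ``Theorem~\ref{th_prop2} would make it a first eigenfunction of some component, and one checks this is incompatible with being strictly larger than $\lambda_1(p,\Omega)$ when the latter is simple.'' That check fails. Take $\Omega=\mathcal W_{r_1}\cup\mathcal W_{r_2}$ with $r_1<r_2$ and $r_1$ not too small (the paper's Remark~\ref{Wulff}): $\lambda_1(p,\Omega)=\lambda_1(p,\mathcal W_{r_2})$ is simple, while $\lambda_2(p,\Omega)=\lambda_1(p,\mathcal W_{r_1})>\lambda_1(p,\Omega)$, and the second eigenfunctions are precisely the \emph{constant-sign} first eigenfunctions of $\mathcal W_{r_1}$, extended by zero. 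There is no contradiction with Theorem~\ref{th_prop2}: it only says $\lambda_2(p,\Omega)=\lambda_1(p,\Omega_0)$ for \emph{some} component $\Omega_0$, and nothing prevents $\lambda_1(p,\Omega_0)>\lambda_1(p,\Omega)$.

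So you are missing exactly the case ``$\lambda_1$ simple and $u_2$ of constant sign,'' and in that case you cannot extract two nodal domains from $u_2$ alone. The paper's fix is to split instead on whether $u_2$ changes sign. When it does not (so $\Omega$ is disconnected), it brings in a \emph{first} eigenfunction $u_1$ not proportional to $u_2$, and uses Harnack to pick a connected component $\Omega_1\subset\{u_1>0\}$ and a disjoint connected component $\Omega_2\subset\{u_2>0\}$; then $\lambda_1(p,\Omega_1)=\lambda_1(p,\Omega)\le\lambda_2(p,\Omega)$ and $\lambda_1(p,\Omega_2)=\lambda_2(p,\Omega)$, so the max is $\lambda_2(p,\Omega)$. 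Your proof is easily repaired by inserting this argument for the missing subcase.
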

\begin{proof}
Let $u_2\in W^{1,p}_0(\Omega)$ be a second normalized eigenfunction. First of all, suppose that $u_{2}$ changes sign in $\Omega$. Then, consider two nodal domains $\Omega_{1}\subseteq \Omega_{+}$ and $\Omega_{2}\subseteq\Omega_{-}$. By definition, $\Omega_{1}$ and $\Omega_{2}$ are connected sets. The restriction of $u_2$ to $\Omega_1$ is, by Theorem \ref{th_prop2}, a first eigenfunction for $\Omega_{1}$ and hence $\lambda_{2}(p, \Omega)=\lambda_1(p,\Omega_1)$. Analogously for $\Omega_2$, hence
\[
\lambda_{2}(p, \Omega)=\lambda_1(p,\Omega_1)=\lambda_{1}(p,\Omega_{2}),
\]
and the proof of the proposition is completed, in the case $u_{2}$ changes sign. 

In the case that $u_{2}$ has constant sign in $\Omega$, for example $u_{2}\ge 0$, then by Theorem \ref{th_prop2} $\Omega$ must be disconnected. If $\lambda_{1}(p,\Omega)$ is simple, by definition $\lambda_{2}(p,\Omega)>\lambda_{1}(p,\Omega)$. Otherwise, $\lambda_{1}(p,\Omega)=\lambda_{2}(p,\Omega)$.
Hence in both cases, we can consider a first nonnegative normalized eigenfunction $u_{1}$ not proportional to $u_{2}$. 

Observe that in any connected component of $\Omega$, by the Harnack inequality, $u_{i}$, $i=1,2$, must be positive or identically zero. Hence we can choose two disjoint connected open sets $\Omega_{1}$ and $\Omega_{2}$, contained respectively in $\{x\in\Omega\colon u_{1}(x)>0\}$ and  $\{x\in\Omega\colon u_{2}(x)>0\}$. Then, $u_{1}$ and $u_{2}$ are first Dirichlet eigenfunctions in $\Omega_{1}$ and $\Omega_{2}$, respectively, and
\[
\lambda_{1}(p,\Omega)=\lambda_1(p,\Omega_1)\le \lambda_{2}(p,\Omega),\qquad \lambda_{2}(p, \Omega)=\lambda_{1}(p,\Omega_{2}),
\]
and the proof is completed.
\end{proof}

Now we are in position to prove the Hong-Krahn-Szego inequality for $\lambda_{2}(p,\Omega)$.
 \begin{thm}
 \label{hkstheo}
 Let $\Omega$ be a bounded open set of $\R^{n}$. Then
\begin{equation}
\label{HKS}
\lambda_2(p,\Omega)\geq  \lambda_2(p,\widetilde{\mathcal W}),
\end{equation}
where $\widetilde{\mathcal W}$ is the union of two disjoint Wulff shapes, each one of measure $\frac{|\Omega|}{2}$. Moreover equality sign in \eqref{HKS} occurs  if  $\Omega$ is the disjoint union of two  Wulff shapes of the same measure.
\end{thm}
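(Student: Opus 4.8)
The plan is to reduce \eqref{HKS} to the anisotropic Faber-Krahn inequality \eqref{faberk} by means of the key lemma, Proposition \ref{keylemma}, together with the scaling law of Proposition \ref{fireig} and the explicit computation of $\lambda_2$ on two equal Wulff shapes from Remark \ref{Wulff}.

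First I would apply Proposition \ref{keylemma} to obtain two disjoint domains $\Omega_1,\Omega_2\subseteq\Omega$ such that $\lambda_2(p,\Omega)=\max\{\lambda_1(p,\Omega_1),\lambda_1(p,\Omega_2)\}$. Since $\Omega_1$ and $\Omega_2$ are disjoint subsets of $\Omega$, we have $|\Omega_1|+|\Omega_2|\le|\Omega|$, so after relabelling we may assume $|\Omega_1|\le|\Omega|/2$; in particular $\lambda_2(p,\Omega)\ge\lambda_1(p,\Omega_1)$.

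Next, let $r>0$ be the common radius of the two Wulff shapes forming $\widetilde{\mathcal W}$, so that $\kappa_n r^n=|\Omega|/2$, and let $r_1>0$ be defined by $\kappa_n r_1^{\,n}=|\Omega_1|\le|\Omega|/2$, whence $r_1\le r$. By the Faber-Krahn inequality \eqref{faberk} applied to $\Omega_1$, and then by the scaling relation $\lambda_1(p,\mathcal W_\rho)=\rho^{-p}\lambda_1(p,\mathcal W)$ of Proposition \ref{fireig}(1), we get
\[
\lambda_1(p,\Omega_1)\ \ge\ \left(\frac{\kappa_n}{|\Omega_1|}\right)^{p/n}\lambda_1(p,\mathcal W)\ =\ r_1^{-p}\,\lambda_1(p,\mathcal W)\ \ge\ r^{-p}\,\lambda_1(p,\mathcal W)\ =\ \lambda_1(p,\mathcal W_r).
\]
Since $\widetilde{\mathcal W}$ is the disjoint union of two Wulff shapes of the same radius $r$, the case $r_1=r_2$ of Remark \ref{Wulff} gives $\lambda_2(p,\widetilde{\mathcal W})=\lambda_1(p,\mathcal W_r)$, and chaining the estimates yields $\lambda_2(p,\Omega)\ge\lambda_1(p,\Omega_1)\ge\lambda_1(p,\mathcal W_r)=\lambda_2(p,\widetilde{\mathcal W})$, which is \eqref{HKS}. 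For the equality statement, if $\Omega$ is the disjoint union of two Wulff shapes of equal measure, then each has measure $|\Omega|/2$, so $\Omega$ coincides, up to a rigid motion, with $\widetilde{\mathcal W}$, and equality holds trivially.

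The main obstacle is already absorbed into Proposition \ref{keylemma}: one cannot in general split an arbitrary $\Omega$ into two pieces both having first eigenvalue equal to $\lambda_2(p,\Omega)$ (this breaks down when $\lambda_1$ is not simple, or when one nodal domain is much smaller than its complement), and the correct device is precisely the ``$\max$'' characterization there, combined with the elementary remark that the smaller of two disjoint subsets of $\Omega$ has measure at most $|\Omega|/2$. Once that is in place, what remains is a direct application of Faber-Krahn together with the monotonicity of $\rho\mapsto\lambda_1(p,\mathcal W_\rho)$, so no further serious difficulty is expected.
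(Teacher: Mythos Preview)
Your proof is correct and follows essentially the same route as the paper's: apply Proposition \ref{keylemma}, use $|\Omega_1|+|\Omega_2|\le|\Omega|$ to ensure the smaller piece has measure at most $|\Omega|/2$, apply Faber-Krahn and scaling, and identify the result with $\lambda_2(p,\widetilde{\mathcal W})$ via Remark \ref{Wulff}. The only cosmetic difference is that the paper keeps both $\Omega_1,\Omega_2$ inside a $\max$ throughout, writing $\max\{|\Omega_1|^{-p/n},|\Omega_2|^{-p/n}\}\ge(|\Omega|/2)^{-p/n}$, whereas you relabel so that $|\Omega_1|\le|\Omega|/2$ and work only with $\Omega_1$; these are the same step. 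One small wording issue: in the equality case, ``up to a rigid motion'' should be ``up to translations'', since a general norm $F$ is not rotation-invariant and Wulff shapes are only translation-invariant---but since any disjoint union of two Wulff shapes of measure $|\Omega|/2$ is itself an admissible $\widetilde{\mathcal W}$, the conclusion is immediate anyway.
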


\begin{proof}
Let $\Omega_{1}$ and $\Omega_{2}$ given by Proposition \ref{keylemma}.
By the Faber-Krahn inequality we have
\begin{equation*}
\lambda_2(p,\Omega)= 
\max\{ \lambda_{1}(p,\Omega_1) ,\lambda_1(p,\Omega_2)\}
\ge \max\{\lambda_{1}(p,\mathcal W_{r_1}), \lambda_1(p,\mathcal W_{r_2})\}\end{equation*}
with $|\mathcal W_{r_i}|=|\Omega_{i}|$. By the rescaling property of $\lambda_{1}(p,\,\cdot\,)$, and observing that, being $\Omega_{1}$ and $\Omega_{2}$ disjoint subsets of $\Omega$, $| \Omega_{1}|+| \Omega_{2}|\le |\Omega|$, we have that
\begin{multline*}
\max\left\{\lambda_{1}(p,\mathcal W_{r_1}), \lambda_1(p,\mathcal W_{r_2})\right\}
= \lambda_1(p,\mathcal W)\kappa_n^{\frac pn}\max\left\{|\Omega_1|^{-\frac pn}, |\Omega_2|^{-\frac pn}\right\}\ge \\ \ge
\lambda_1(p,\mathcal W)\kappa_n^{\frac pn}\left(\frac{|\Omega|}{2}\right)^{-\frac pn}=\lambda_{1}(p, \widetilde{\mathcal W})
.
\end{multline*}
\end{proof}
  
\section{The limit case \texorpdfstring{$p\to \infty$}{TEXT}}
In this section we derive some information on $\lambda_{2}(p,\Omega)$ as $p$ goes to infinity.  
First of all we recall some known result about the limit of the first eigenvalue.
Let us consider a bounded open set $\Omega$. 

The anisotropic distance of $x\in\overline\Omega$ to the boundary of $\Omega$ is the function 
\begin{equation*}
d_{F}(x)= \inf_{y\in \de \Omega} F^o(x-y), \quad x\in \overline\Omega.
\end{equation*}

We stress that when $F=|\cdot|$ then $d_F=d_{\mathcal{E}}$, the Euclidean distance function from the boundary.

It is not difficult to prove that $d_{F}$ is a uniform Lipschitz function in $\overline \Omega$ and
\begin{equation*}
  F(\nabla d_F(x))=1 \quad\text{a.e. in }\Omega.
\end{equation*}
Obviously, $d_F\in W_{0}^{1,\infty}(\Omega)$. Let us consider the quantity
\begin{equation*}
\rho_{F}=\max \{d_{F}(x),\; x\in\overline\Omega\}.
\end{equation*}
If $\Omega$ is connected, $\rho_{F}$ is called the anisotropic inradius of $\Omega$. If not, $\rho_{F}$ is the maximum of the inradii of the connected components of $\Omega$.  

For further properties of the anisotropic distance function we refer
the reader to \cite{cm07}.
\begin{rem}
It is easy to prove (see also \cite{jlm,bkj}) that the distance function satisfies
\begin{equation}
\label{propminimo}
\frac{1}{\rho_{F}(\Omega)} =\frac{1}{\|d_{F}\|_{L^{\infty}(\Omega)}}=
\min_{\varphi\in W_{0}^{1,\infty}(\Omega)\setminus\{0\}}\frac{\|F(\nabla \varphi)\|_{L^{\infty}(\Omega)}}{\| \varphi\|_{L^{\infty}(\Omega)}}.
\end{equation}
Indeed it is sufficient to observe that if $\varphi\in C_{0}^{1}(\Omega)\cap C(\overline\Omega)$, then $\varphi\in C_{0}^{1}(\Omega_{i})\cap C(\overline\Omega_{i})$, for any connected component $\Omega_{i}$ of $\Omega$. Then for a.e. $x\in \Omega_{i}$, for $y\in \de \Omega_{i}$ which achieves $F^{o}(x-y)=d_{F}(x)$, it holds
\begin{multline*}
|\varphi(x)|=|\varphi(x)-\varphi(y)|=
|\langle\nabla \varphi(\xi), x-y\rangle| \le \\ \le F(\nabla \varphi(\xi)) \,F^{o}(x-y) \le \|F(\nabla \varphi)\|_{L^{\infty}(\Omega)} d_{F}(x).
\end{multline*}
Passing to the supremum and by density we get \eqref{propminimo}. 
\end{rem}

The following result holds (see \cite{bkj,jlm}).
\begin{thm}
\label{limprimo}
Let $\Omega$ be a bounded domain in $\R^{n}$, and let $\lambda_{1}(p,\Omega)$ be the first eigenvalue of \eqref{eigpb}. Then
\[
\lim_{p \to \infty}\lambda_{1}(p,\Omega)^{\frac 1p}=\frac{1}{\rho_{F}(\Omega)}. 
\]
\end{thm}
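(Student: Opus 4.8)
The plan is to establish the two inequalities $\limsup_{p\to\infty}\lambda_1(p,\Omega)^{1/p}\le \rho_F(\Omega)^{-1}$ and $\liminf_{p\to\infty}\lambda_1(p,\Omega)^{1/p}\ge \rho_F(\Omega)^{-1}$ separately, using the variational characterization \eqref{rayleigh} on one side and a convergence-of-eigenfunctions argument on the other. For the upper bound I would test \eqref{rayleigh} with a fixed $\varphi\in W_0^{1,\infty}(\Omega)\setminus\{0\}$, giving
\[
\lambda_1(p,\Omega)^{1/p}\le\left(\frac{\int_\Omega F^p(\nabla\varphi)\,dx}{\int_\Omega|\varphi|^p\,dx}\right)^{1/p}\le \frac{|\Omega|^{1/p}\,\|F(\nabla\varphi)\|_{L^\infty(\Omega)}}{\left(\int_\Omega|\varphi|^p\,dx\right)^{1/p}}.
\]
Letting $p\to\infty$, the right-hand side tends to $\|F(\nabla\varphi)\|_{L^\infty(\Omega)}/\|\varphi\|_{L^\infty(\Omega)}$ (using $|\Omega|^{1/p}\to 1$ and convergence of $L^p$ norms to the $L^\infty$ norm); then taking the infimum over $\varphi$ and invoking \eqref{propminimo} yields $\limsup_p\lambda_1(p,\Omega)^{1/p}\le\rho_F(\Omega)^{-1}$.

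For the lower bound I would argue by compactness. Fix $q>n$ and let $u_p$ be a first normalized eigenfunction with $\int_\Omega|u_p|^p\,dx=1$; after replacing $p$ by a subsequence $p\to\infty$ along which $\lambda_1(p,\Omega)^{1/p}$ converges, I want to pass to the limit. From $\int_\Omega F^p(\nabla u_p)\,dx=\lambda_1(p,\Omega)$ and \eqref{eq:lin} one gets, via Hölder on a set of finite measure, a uniform bound on $\|\nabla u_p\|_{L^q(\Omega)}$ for every fixed $q$, provided $\lambda_1(p,\Omega)^{1/p}$ stays bounded (which follows from the already-proved upper bound). Hence up to a further subsequence $u_p\to u_\infty$ uniformly on $\overline\Omega$ (Morrey embedding) and weakly in $W^{1,q}_0(\Omega)$, with $u_\infty\not\equiv 0$ because $\|u_p\|_{L^\infty}\ge\|u_p\|_{L^p}/|\Omega|^{1/p}\to$ something bounded below — more carefully, normalizing instead so that $\|u_p\|_{L^\infty(\Omega)}=1$ keeps $u_\infty$ nontrivial and keeps $\int_\Omega|u_p|^p\,dx$ comparable to $1$. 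Then for any fixed $q$, weak lower semicontinuity gives
\[
\|F(\nabla u_\infty)\|_{L^q(\Omega)}\le\liminf_p\|F(\nabla u_p)\|_{L^q(\Omega)}\le\liminf_p|\Omega|^{1/q-1/p}\,\lambda_1(p,\Omega)^{1/p}=|\Omega|^{1/q}\Lambda,
\]
where $\Lambda:=\lim_p\lambda_1(p,\Omega)^{1/p}$; letting $q\to\infty$ shows $\|F(\nabla u_\infty)\|_{L^\infty(\Omega)}\le\Lambda\,\|u_\infty\|_{L^\infty(\Omega)}$ after restoring the normalization, and \eqref{propminimo} then forces $\Lambda\ge\rho_F(\Omega)^{-1}$.

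The main obstacle is the compactness step: one must secure a uniform-in-$p$ gradient estimate strong enough to extract a limit in a space that embeds into $C(\overline\Omega)$, and one must be careful that the limit function $u_\infty$ is not identically zero and that the normalization is propagated correctly to the limit (this is why normalizing in $L^\infty$, or tracking $\|u_p\|_{L^p}^{1/p}\to 1$ type statements, is cleaner than normalizing in $L^p$). A secondary technical point is justifying $\lim_p\|g\|_{L^p(\Omega)}=\|g\|_{L^\infty(\Omega)}$ for the relevant functions and handling the factors $|\Omega|^{1/p}\to 1$ uniformly; these are standard but need the boundedness of $\Omega$. Everything else — the variational inequality, weak lower semicontinuity of $\varphi\mapsto\int_\Omega F^q(\nabla\varphi)$ from convexity of $F$, and the Lipschitz characterization \eqref{propminimo} — is already available in the excerpt.
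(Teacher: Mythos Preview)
The paper does not actually prove Theorem~\ref{limprimo}; it states the result and cites \cite{bkj,jlm}. Your two-step argument --- the variational upper bound using a fixed $W_0^{1,\infty}$ competitor together with \eqref{propminimo}, and the lower bound via uniform $W^{1,q}$ compactness of normalized eigenfunctions plus weak lower semicontinuity of $\int F^q(\nabla\cdot)$ --- is precisely the method of those references, and it is correct. The only wobble is the normalization bookkeeping you flag yourself: fixing $\|u_p\|_{L^\infty(\Omega)}=1$ from the outset (rather than $\|u_p\|_{L^p}=1$) makes the nontriviality of $u_\infty$ immediate from uniform convergence and cleans up the chain $\|F(\nabla u_p)\|_{L^q}\le |\Omega|^{1/q}\lambda_1(p,\Omega)^{1/p}$, after which \eqref{propminimo} closes the lower bound exactly as you indicate.
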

Now let us define
\begin{equation*}
\Lambda_1(\infty,\Omega)= \frac{1}{\rho_{F}(\Omega)}. 
\end{equation*}
The value $\Lambda_{1}(\infty,\Omega)$ is related to the so-called anisotropic infinity Laplacian operator defined in \cite{bkj}, that is
\begin{equation*}
\mathcal Q_{\infty} u = \langle\nabla^{2} u\, J(\nabla u), J(\nabla u) \rangle,
\end{equation*}
where $J(\xi)=\frac12\nabla_{\xi}\left[F^{2}\right](\xi)$. Note that we mean, by continuous extension, $J(0)=0$. This is possible being $F$ $1$-homogeneous and $F(0)=0$.

Indeed, in \cite{bkj} the following result is proved. 
\begin{thm}
\label{eqfirst}
Let $\Omega$ be a bounded domain in $\R^{n}$. Then, there exists a positive solution $u_{\infty}\in W_{0}^{1,\infty}(\Omega)\cap C(\bar\Omega)$ which satisfies, in the viscosity sense, the following problem:
\begin{equation}
\label{firstinf}
\left\{
\begin{array}{ll}
\min\{F(\nabla u)-\Lambda u,-\mathcal Q_{\infty}u\}=0 &\text{in }\Omega,\\[.2cm]
u=0 &\text{on }\de\Omega.
\end{array}
\right.
\end{equation}
with $\Lambda=\Lambda_{1}(\infty,\Omega)$.  Moreover, any positive solution $v\in W_{0}^{1,\infty}(\Omega)$ to \eqref{firstinf} with $\Lambda=\Lambda_{1}(\infty,\Omega)$ satisfies  
\[
\frac{\|F(\nabla v)\|_{L^{\infty}(\Omega)}}{\|v\|_{L^{\infty}(\Omega)}}
=\min_{\varphi\in W_{0}^{1,\infty}(\Omega)\setminus\{0\}}
\frac{\|F(\nabla \varphi)\|_{L^{\infty}(\Omega)}}{\|\varphi\|_{L^{\infty}(\Omega)}}=\Lambda_1(\infty,\Omega)= \frac{1}{\rho_{F}(\Omega)}. 
\]
Finally, if problem \eqref{firstinf} admits a positive viscosity solution in $\Omega$, then $\Lambda=\Lambda_{1}(\infty,\Omega)$.
\end{thm}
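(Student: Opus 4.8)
The plan is to obtain $u_{\infty}$ as a limit of normalized first eigenfunctions and to pass to the limit in the corresponding PDE. First I would take, for each $p$, a positive first eigenfunction $u_{p}$ of \eqref{eigpb} normalized by $\|u_{p}\|_{L^{\infty}(\Omega)}=1$. From the Rayleigh identity $\int_{\Omega}F^{p}(\nabla u_{p})\,dx=\lambda_{1}(p,\Omega)\int_{\Omega}u_{p}^{p}\,dx\le\lambda_{1}(p,\Omega)\,|\Omega|$ and H\"older's inequality one gets, for every fixed $q>n$ and every $p>q$, that $\|F(\nabla u_{p})\|_{L^{q}(\Omega)}\le|\Omega|^{1/q}\lambda_{1}(p,\Omega)^{1/p}$, hence, by \eqref{eq:lin} and Theorem~\ref{limprimo}, a bound on $\|\nabla u_{p}\|_{L^{q}(\Omega)}$ uniform in $p$ large. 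A diagonal argument along a sequence $p_{j}\to\infty$, together with the compact embedding $W^{1,q}_{0}(\Omega)\hookrightarrow C^{0,1-n/q}(\overline\Omega)$, produces $u_{\infty}\in W^{1,\infty}_{0}(\Omega)\cap C(\overline\Omega)$ with $u_{p_{j}}\to u_{\infty}$ uniformly on $\overline\Omega$ and $\nabla u_{p_{j}}\rightharpoonup\nabla u_{\infty}$ weakly in each $L^{q}$; in particular $u_{\infty}\ge0$, $u_{\infty}=0$ on $\partial\Omega$, $\|u_{\infty}\|_{L^{\infty}(\Omega)}=1$, and by weak lower semicontinuity of $\varphi\mapsto\|F(\nabla\varphi)\|_{L^{q}}$ followed by $q\to\infty$, $\|F(\nabla u_{\infty})\|_{L^{\infty}(\Omega)}\le\limsup_{p}\lambda_{1}(p,\Omega)^{1/p}=1/\rho_{F}(\Omega)$. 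Combined with the variational lower bound \eqref{propminimo} this already yields, for $u_{\infty}$ itself, $\|F(\nabla u_{\infty})\|_{L^{\infty}}/\|u_{\infty}\|_{L^{\infty}}=1/\rho_{F}(\Omega)=\Lambda_{1}(\infty,\Omega)$; and a standard comparison argument (as in \cite{bkj,jlm}) shows $u_{\infty}>0$ in $\Omega$.

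Next I would pass to the limit in the equation. Each $u_{p}\in C^{1,\alpha}(\Omega)$ is a weak solution and, being $C^{1}$, a viscosity solution of $-\mathcal Q_{p}u_{p}=\lambda_{1}(p,\Omega)\,u_{p}^{p-1}$, and a direct computation gives
\[
\mathcal Q_{p}u=(p-1)\,F^{p-4}(\nabla u)\,\mathcal Q_{\infty}u+F^{p-1}(\nabla u)\,\mathrm{tr}\!\big(\nabla^{2}_{\xi}F(\nabla u)\,\nabla^{2}u\big).
\]
Put $\Lambda:=\Lambda_{1}(\infty,\Omega)=1/\rho_{F}(\Omega)$ and $\Lambda_{p}:=\lambda_{1}(p,\Omega)^{1/p}\to\Lambda$. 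Let $\varphi\in C^{2}$ touch $u_{\infty}$ from below at $x_{0}\in\Omega$, and assume $\nabla\varphi(x_{0})\ne0$ (the case $\nabla\varphi(x_{0})=0$, which can occur only where $u_{\infty}>0$, is treated by the perturbation technique of \cite{jlm}). By uniform convergence, $u_{p_{j}}-\varphi$ has a local minimum at some $x_{p_{j}}\to x_{0}$ with $\varphi(x_{p_{j}})=u_{p_{j}}(x_{p_{j}})>0$, so $-\mathcal Q_{p_{j}}\varphi(x_{p_{j}})\ge\lambda_{1}(p_{j},\Omega)\,\varphi(x_{p_{j}})^{p_{j}-1}$; using the decomposition and dividing by $(p_{j}-1)F^{p_{j}-4}(\nabla\varphi(x_{p_{j}}))>0$, the second-order remainder is $O(1/p_{j})$ while the right-hand side equals $O(1/p_{j})\big(\Lambda_{p_{j}}\varphi(x_{p_{j}})/F(\nabla\varphi(x_{p_{j}}))\big)^{p_{j}}$. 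Letting $j\to\infty$: if $F(\nabla\varphi(x_{0}))<\Lambda u_{\infty}(x_{0})$ the exponential factor diverges and beats the $1/p_{j}$, forcing a finite quantity to be $\ge+\infty$, a contradiction; hence $F(\nabla\varphi(x_{0}))\ge\Lambda u_{\infty}(x_{0})$, and since the right-hand side is nonnegative, the same passage gives $-\mathcal Q_{\infty}\varphi(x_{0})\ge0$. Thus $u_{\infty}$ is a viscosity supersolution of \eqref{firstinf}. Symmetrically, if $\varphi$ touches $u_{\infty}$ from above at $x_{0}\in\Omega$: when $\nabla\varphi(x_{0})=0$ one has $F(\nabla\varphi(x_{0}))=0\le\Lambda u_{\infty}(x_{0})$ at once; when $\nabla\varphi(x_{0})\ne0$, assuming both $F(\nabla\varphi(x_{0}))>\Lambda u_{\infty}(x_{0})$ and $-\mathcal Q_{\infty}\varphi(x_{0})>0$ makes the exponential factor tend to $0$ and forces $-\mathcal Q_{\infty}\varphi(x_{0})\le0$, a contradiction; so $\min\{F(\nabla\varphi(x_{0}))-\Lambda u_{\infty}(x_{0}),-\mathcal Q_{\infty}\varphi(x_{0})\}\le0$ and $u_{\infty}$ is also a subsolution. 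This proves the first assertion, with $\Lambda=\Lambda_{1}(\infty,\Omega)$.

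For the ``moreover'' and ``finally'' parts, let $v\in W^{1,\infty}_{0}(\Omega)$ be any positive viscosity solution of \eqref{firstinf} for some constant $\Lambda$. That $\|F(\nabla v)\|_{L^{\infty}}/\|v\|_{L^{\infty}}\ge1/\rho_{F}(\Omega)$ is immediate from \eqref{propminimo}. For the reverse inequality and $\Lambda=1/\rho_{F}(\Omega)$ I would follow the comparison arguments of \cite{bkj,jlm}: from the supersolution property, $v$ is a viscosity supersolution of $-\mathcal Q_{\infty}v=0$, hence $\mathcal Q_{\infty}$-superharmonic, and moreover $F(\nabla v)\ge\Lambda v$ holds a.e. (test at a differentiability point by a $C^{1}$ function of gradient $\nabla v(x_{0})$); $\mathcal Q_{\infty}$-superharmonicity and comparison with $F^{o}$-cones through a maximum point $x_{0}$ of $v$ on a maximal Wulff shape $\mathcal W_{R}(x_{0})\subset\Omega$ (with $R=d_{F}(x_{0})$, so $v$ vanishes at the contact point of $\partial\mathcal W_{R}(x_{0})$ with $\partial\Omega$) give $v(x)\ge\|v\|_{L^{\infty}}(1-F^{o}(x-x_{0})/R)$ for $x\in\mathcal W_{R}(x_{0})$; on the other hand the obstacle structure of \eqref{firstinf} prevents $v$ from being $\mathcal Q_{\infty}$-harmonic near the interior maximum $x_{0}$ and forces it to have a cone-type peak of rate $\Lambda\|v\|_{L^{\infty}}$ at $x_{0}$. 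Matching these behaviours and using that $x_{0}$ may be taken at the incentre yields $\Lambda=1/\rho_{F}(\Omega)$ and $\|F(\nabla v)\|_{L^{\infty}}=\Lambda\|v\|_{L^{\infty}}$. The case $v=u_{\infty}$ is the ``moreover'' statement and the general case is the ``finally'' one.

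The step I expect to be the main obstacle is the viscosity limit of the second paragraph: one must control, as $p\to\infty$, the competition between the exponentially growing or decaying factor $\big(\Lambda_{p}\varphi/F(\nabla\varphi)\big)^{p}$ and the algebraic prefactors, and --- the genuinely delicate point --- give a rigorous treatment of test functions with vanishing gradient at interior points where $u_{\infty}$ is positive, precisely where the eikonal and the $\mathcal Q_{\infty}$ terms balance, which is handled by a perturbation argument in the spirit of \cite{jlm}. The comparison-principle arguments for $\mathcal Q_{\infty}$ and the eikonal operator underlying the last paragraph are standard but likewise not entirely routine.
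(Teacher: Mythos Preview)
The paper does not prove Theorem~\ref{eqfirst}: it is quoted verbatim as a result of \cite{bkj} (note the sentence ``Indeed, in \cite{bkj} the following result is proved'' immediately preceding the statement), and no proof is given. Your sketch is a faithful reconstruction of the argument in \cite{bkj}, which itself adapts \cite{jlm} to the Finsler setting, and the outline is correct: compactness of normalized first eigenfunctions via uniform $W^{1,q}$ bounds, passage to the limit in the viscosity formulation through the decomposition of $\mathcal Q_{p}$, and cone-comparison for the characterization of $\Lambda$. The delicate points you flag --- test functions with vanishing gradient at interior maxima, and the comparison principle for $\mathcal Q_{\infty}$ --- are indeed the ones that require care, and are treated in \cite{bkj} by a perturbation argument and by the results of \cite{bb} respectively. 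One cosmetic remark: later in the paper (proof of Theorem~\ref{limsecondo}) the decomposition of $\mathcal Q_{p}$ is written with leading factor $(p-2)F^{p-4}$ because it is expressed in terms of $J=\tfrac12\nabla_{\xi}[F^{2}]$ rather than $\nabla_{\xi}F$; your version with $(p-1)F^{p-4}$ is equally valid and the difference is immaterial in the limit.
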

\begin{prop}
\label{propprimo}
Theorem \ref{limprimo} holds also when $\Omega$ is a bounded open set of $\R^{n}$.
\end{prop}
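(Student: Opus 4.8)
\noindent\emph{Proof plan.}
The plan is to carry out the $p\to\infty$ argument of \cite{bkj,jlm} (behind Theorem~\ref{limprimo}) directly for a general bounded open set, using that the one ingredient which referred to connectedness, namely the geometric characterization of $1/\rho_F(\Omega)$, is already available in the form \eqref{propminimo} for any bounded open set. For the upper bound I would test \eqref{rayleigh} with $d_F\in W_0^{1,\infty}(\Omega)\subseteq W_0^{1,p}(\Omega)$: since $F(\nabla d_F)=1$ a.e.\ in $\Omega$,
\[
\lambda_1(p,\Omega)^{1/p}\le\frac{\big(\int_\Omega F^p(\nabla d_F)\,dx\big)^{1/p}}{\big(\int_\Omega d_F^p\,dx\big)^{1/p}}=\frac{|\Omega|^{1/p}}{\|d_F\|_{L^p(\Omega)}},
\]
and since $\Omega$ has finite measure, $|\Omega|^{1/p}\to1$ and $\|d_F\|_{L^p(\Omega)}\to\|d_F\|_{L^\infty(\Omega)}=\rho_F(\Omega)$ as $p\to\infty$, whence $\limsup_{p\to\infty}\lambda_1(p,\Omega)^{1/p}\le1/\rho_F(\Omega)$. (Alternatively: since $\overline\Omega$ is compact and $d_F$ continuous, the inradius is attained at an interior point, hence at some connected component $\Omega_0$ with $\rho_F(\Omega_0)=\rho_F(\Omega)$, and $\lambda_1(p,\Omega)\le\lambda_1(p,\Omega_0)$ together with Theorem~\ref{limprimo} gives the same bound.)

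For the lower bound I would take a first eigenfunction $u_p$ (Theorem~\ref{th_prop1}) normalized by $\|u_p\|_{L^\infty(\Omega)}=1$. Since $\int_\Omega F^p(\nabla u_p)\,dx=\lambda_1(p,\Omega)\int_\Omega|u_p|^p\,dx\le\lambda_1(p,\Omega)|\Omega|$, using \eqref{eq:lin} one gets $\|\nabla u_p\|_{L^p(\Omega)}\le a^{-1}\lambda_1(p,\Omega)^{1/p}|\Omega|^{1/p}$, which stays bounded by the upper bound. Hölder's inequality on the bounded set $\Omega$ then makes $\{u_p\}_{p>q}$ bounded in $W_0^{1,q}(\Omega)$ for every finite $q$; extending by zero and choosing $q>n$, Morrey's embedding gives a uniform $C^{0,\alpha}(\overline\Omega)$ bound, so along a subsequence $u_{p_j}\to u_\infty$ uniformly on $\overline\Omega$, with $u_\infty\in W_0^{1,\infty}(\Omega)$ and $\|u_\infty\|_{L^\infty(\Omega)}=1$; we may also assume $\lambda_1(p_j,\Omega)^{1/p_j}\to\Lambda:=\liminf_{p\to\infty}\lambda_1(p,\Omega)^{1/p}$. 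For $q<p$, Hölder gives $\|F(\nabla u_p)\|_{L^q(\Omega)}\le|\Omega|^{1/q-1/p}\|F(\nabla u_p)\|_{L^p(\Omega)}\le|\Omega|^{1/q}\lambda_1(p,\Omega)^{1/p}$, so by weak lower semicontinuity of the convex functional $v\mapsto\int_\Omega F^q(\nabla v)\,dx$,
\[
\|F(\nabla u_\infty)\|_{L^q(\Omega)}\le\liminf_{j}\|F(\nabla u_{p_j})\|_{L^q(\Omega)}\le|\Omega|^{1/q}\Lambda
\]
for every finite $q$; letting $q\to\infty$, $\|F(\nabla u_\infty)\|_{L^\infty(\Omega)}\le\Lambda$. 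Since $u_\infty\not\equiv0$, \eqref{propminimo} finally yields
\[
\frac{1}{\rho_F(\Omega)}\le\frac{\|F(\nabla u_\infty)\|_{L^\infty(\Omega)}}{\|u_\infty\|_{L^\infty(\Omega)}}\le\Lambda,
\]
that is $\liminf_{p\to\infty}\lambda_1(p,\Omega)^{1/p}\ge1/\rho_F(\Omega)$; combining with the upper bound gives $\lim_{p\to\infty}\lambda_1(p,\Omega)^{1/p}=1/\rho_F(\Omega)$.

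The main obstacle will be the non-degeneracy $u_\infty\not\equiv0$, without which the use of \eqref{propminimo} would be empty: this is precisely why I would normalize in $L^\infty$ rather than in $L^p$ (an $L^p$-normalized spike may converge weakly to $0$), and why it matters that the $W_0^{1,q}$-bounds are uniform in $p\ge q$, so that the $C^{0,\alpha}$-equicontinuity, and hence the normalization $\|u_{p_j}\|_{L^\infty(\Omega)}=1$, is preserved in the limit. Everything else is routine $p\to\infty$ bookkeeping, with the factors $|\Omega|^{1/p}$, $|\Omega|^{1/q}$ all converging to $1$.
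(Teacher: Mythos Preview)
Your argument is correct and in fact more elementary than the paper's. The paper proceeds by reduction to the connected case: for the upper bound it uses exactly your alternative, picking a component $\Omega_0$ with $\rho_F(\Omega_0)=\rho_F(\Omega)$ and invoking monotonicity together with Theorem~\ref{limprimo}. For the lower bound the paper normalizes in $L^{p}$, passes to a uniform limit $u_\infty$, and then invokes the viscosity machinery of \cite{bkj} plus the maximum principle of \cite{bb} to show that $u_\infty$ is positive on some connected component $\tilde\Omega$; uniform convergence forces $u_{p_j}>0$ on $\tilde\Omega$, so by Theorem~\ref{th_prop2} $\lambda_1(p_j,\Omega)=\lambda_1(p_j,\tilde\Omega)$, and Theorem~\ref{limprimo} applied to $\tilde\Omega$ closes the argument.

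By contrast, you bypass viscosity solutions entirely: normalizing in $L^\infty$ guarantees $u_\infty\not\equiv 0$ directly, and then \eqref{propminimo}---which the paper has already observed to hold on arbitrary bounded open sets---yields the lower bound with no further structure needed. Your route is shorter and uses strictly less machinery (no \cite{bb}, no Theorem~\ref{eqfirst}, no identification of a ``good'' component). The paper's route, on the other hand, extracts additional information along the way: it shows that the limit $u_\infty$ is a viscosity solution of \eqref{firstinf} and locates a component $\tilde\Omega$ on which $\rho_F(\tilde\Omega)=\rho_F(\Omega)$, facts that are later reused in the proof of Theorem~\ref{limsecondo} (Case~2). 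So your approach is cleaner for the bare limit statement, while the paper's yields byproducts that feed into subsequent results.
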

\begin{proof}
Suppose that $\Omega$ is not connected, and consider a connected component $\Omega_{0}$ of $\Omega$ with anisotropic inradius $\rho_{F}(\Omega)$.
By the monotonicity property of $\lambda_{1}(p,\Omega)$ given in  Proposition \ref{fireig}, we have 
\[
\lambda_{1}(p,\Omega) \le \lambda_{1}(p,\Omega_{0}).
\]
Then up to a subsequence,  passing to the limit as $p\to +\infty$  and using Theorem \ref{limprimo} we have
\begin{equation}
\label{eq:sconnessi1}
\tilde \Lambda=\lim_{p_{j} \to \infty} \lambda_{1}(p_{j},\Omega)^{\frac{1}{p_{j}}}\le \frac{1}{\rho_{F}(\Omega)}. 
\end{equation}
In order to prove that $\tilde \Lambda=\rho_{F}(\Omega)^{-1}$, 
let $u_{p_{j}}$ the first nonnegative normalized eigenfunction associated to $\lambda_{1}(p_{j},\Omega)$. Reasoning as in \cite{bkj}, the sequence $u_{p_{j}}$ converges to a function $u_{\infty}$ in $C^{0}(\Omega)$ which is a viscosity solution of \eqref{firstinf} associated to $\tilde \Lambda$. Then by the maximum principle contained in \cite[Lemma 3.2]{bb}, in each connected component of $\Omega$, $u_{\infty}$ is either positive or identically zero. Denoting by $\tilde\Omega$ a connected component of $\{u_{\infty}>0\}$, by the uniform convergence, for $p_{j}$ large, also $u_{p_{j}}$ is positive in $\tilde \Omega$. Then by Theorem \ref{th_prop2} we have
\[
\lambda_{1}(p_{j},\tilde\Omega) = \lambda_{1}(p_{j},\Omega),\quad\text{and then}\quad\frac{1}{\rho_{F}(\tilde\Omega)}=\tilde \Lambda.
\]
By \eqref{eq:sconnessi1} and by definition of $\rho_{F}$, $\tilde \Lambda\le \rho_{F}(\Omega)^{-1}\le\rho_{F}(\tilde \Omega)^{-1}=\tilde\Lambda$; then necessarily $\tilde \Lambda = \rho_{F}(\Omega)^{-1}$.
\end{proof}

In order to define the eigenvalue problem for $\mathcal Q_{\infty}$, let us consider the following operator
\begin{equation*}
\mathcal A_{\Lambda}(s,\xi,X) = 
\left\{
\begin{array}{ll}
\min\{ F(\xi)-\Lambda s, - \langle X\,J(\xi), J(\xi) \rangle \} &\text{ if } s>0,\\[.2cm]
- \langle X\,J(\xi), J(\xi) \rangle &\text{ if } s=0,\\[.2cm]
\max\{ -F(\xi)-\Lambda s, - \langle X\,J(\xi), J(\xi) \rangle \}&\text{ if } s<0,
\end{array}
\right.
\end{equation*}
with $(s,\xi,X) \in \R \times\R^{n}\times S^{n\times n}$, where $S^{n\times n}$ denotes the space of real, symmetric matrices of order $n$.  
Clearly $\mathcal A_{\Lambda}$ is not continuous in $s=0$.

For completeness we recall the definition of viscosity solution for the operator $\mathcal A_{\Lambda}$.
\begin{defn}
\label{defsolvisc}
Let $\Omega \subset \R^{n}$ a bounded open set. A function $u \in C(\Omega)$ is a viscosity subsolution (resp. supersolution) of $\mathcal A_{\Lambda}(x,u,\nabla u)=0$ if 
\[
\mathcal A_{\Lambda}(\phi(x),\nabla \phi(x), \nabla ^{2}\phi(x)) \le 0 \quad (\text{resp. } \mathcal A_{\Lambda}(\phi(x),\nabla \phi(x), \nabla^{2}\phi(x)) \ge 0),
\] 
for every $\phi \in C^{2}(\Omega)$ such that $u-\phi$ has a local maximum (resp. minimum) zero at $x$.  A function $u \in C(\Omega)$ is a viscosity solution of $\mathcal A_{\Lambda}=0$ if it is both a viscosity subsolution and a viscosity supersolution and in this case the number $\Lambda$ is called an eigenvalue for $\mathcal Q_{\infty}$.
\end{defn}
\begin{defn}
\label{defeig}
We say that $u\in C(\bar \Omega)$, $u|_{\de\Omega}=0$, $u\not\equiv 0$ is an eigenfunction for the anisotropic $\infty-$Laplacian if there exists $\Lambda\in\R$ such that
\begin{equation}
\label{viscinfty}
 \mathcal A_{\Lambda}(u,\nabla u,\nabla^{2}u)=0 \quad\text{in }\Omega
\end{equation}
in the viscosity sense. Such value $\Lambda$ will be called an eigenvalue for the anisotropic $\infty-$Laplacian.
\end{defn}

In order to define the second eigenvalue for $\mathcal Q_{\infty}$
we introduce the following number:
\begin{equation*}
\rho_{2,F}(\Omega)=\sup \{ \rho>0 \colon \text{there are two disjoint Wulff shapes }\mathcal W_{1}, \mathcal W_{2}\subset\Omega \text{ of radius }\rho\},
\end{equation*}
and let us define
\[
\Lambda_2(\infty,\Omega)=  \frac{1}{\rho_{2,F}(\Omega)}.
\]

Clearly 
\begin{equation*}
\Lambda_1(\infty,\Omega) \le \Lambda_2(\infty,\Omega).
\end{equation*}

\begin{rem}
It is easy to construct open sets $\Omega$ such that 
$\Lambda_1(\infty,\Omega)=\Lambda_2(\infty,\Omega)$. For example, this holds when $\Omega$ coincides with the union of two disjoint Wulff shapes with same measure, or their convex envelope.
\end{rem} 

\begin{rem}
A simple example of $\rho_{2,F}(\Omega)$ is given when $\Omega$ is the union of two disjoint Wulff sets, $\Omega=\mathcal W_{r_{1}}\cup W_{r_{2}}$, with $r_{2}\le r_{1}$. In this case, $\Lambda_{1}(\infty,\Omega)=\frac{1}{r_{1}}$ and, if $r_{2}$ is not too small, then $\Lambda_{2}(\infty,\Omega)=\frac{1}{r_{2}}$.
\end{rem}
\begin{thm}
\label{limsecondo}
Let $\Omega \subset \R^n$ be a bounded open set and let $\lambda_{2}(p,\Omega)$ be the second Dirichlet eigenvalue of $-\mathcal Q_{p}$ in $\Omega$. Then 
\[
\lim_{p\to \infty}\lambda_{2}(p,\Omega)^{\frac1p} = \Lambda_2(\infty,\Omega)= \frac{1}{\rho_{2,F}(\Omega)}.
\]
Moreover $\Lambda_2(\infty,\Omega)$ is an eigenvalue of $\mathcal Q_{\infty}$,  that is  $\Lambda_2(\infty,\Omega)$ is an eigenvalue for the anisotropic infinity Laplacian in the sense of Definition \ref{defeig}.
\end{thm}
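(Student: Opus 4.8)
The plan is to prove the two one-sided estimates $\limsup_{p\to\infty}\lambda_2(p,\Omega)^{1/p}\le 1/\rho_{2,F}(\Omega)$ and $\liminf_{p\to\infty}\lambda_2(p,\Omega)^{1/p}\ge 1/\rho_{2,F}(\Omega)$ separately, and to extract along the way a viscosity eigenfunction of $\mathcal Q_\infty$ at the level $\Lambda_2(\infty,\Omega)$. The upper bound is the easy direction: given $0<\rho<\rho_{2,F}(\Omega)$ I would pick two disjoint Wulff shapes $\mathcal W_\rho^{(1)},\mathcal W_\rho^{(2)}\subset\Omega$ of radius $\rho$ and use the monotonicity of Proposition \ref{monotone}, the explicit computation of Remark \ref{Wulff} in the case of equal radii, and the scaling property of Proposition \ref{fireig}, obtaining
\[
\lambda_2(p,\Omega)\le\lambda_2\bigl(p,\mathcal W_\rho^{(1)}\cup\mathcal W_\rho^{(2)}\bigr)=\lambda_1(p,\mathcal W_\rho)=\rho^{-p}\lambda_1(p,\mathcal W).
\]
Taking $p$-th roots, letting $p\to\infty$ with the help of Theorem \ref{limprimo} and the elementary identity $\rho_F(\mathcal W)=1$, and then letting $\rho\uparrow\rho_{2,F}(\Omega)$, yields the claimed upper bound.

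For the lower bound I would argue by subsequences. Since $\lambda_2(p,\Omega)^{1/p}$ is bounded above (by the previous step) and below (by $\lambda_1(p,\Omega)^{1/p}$, which converges to $1/\rho_F(\Omega)>0$ by Proposition \ref{propprimo}), it suffices to show that every subsequential limit $L=\lim_j\lambda_2(p_j,\Omega)^{1/p_j}$ satisfies $L\ge1/\rho_{2,F}(\Omega)$. By Proposition \ref{keylemma}, for each $j$ there are disjoint domains $\Omega_1^j,\Omega_2^j\subset\Omega$ with $\lambda_1(p_j,\Omega_2^j)=\lambda_2(p_j,\Omega)$ and $\lambda_1(p_j,\Omega_1^j)\le\lambda_2(p_j,\Omega)$. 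Let $w_i^j\ge0$ be the $L^{p_j}(\Omega)$–normalized first eigenfunction of $\Omega_i^j$, extended by zero to $\Omega$, so that $\int_\Omega F^{p_j}(\nabla w_i^j)\,dx=\lambda_1(p_j,\Omega_i^j)$ is bounded in $j$. By H\"older's and Poincar\'e's inequalities $(w_i^j)_j$ is bounded in $W^{1,q}(\Omega)$ for every $q<\infty$, so, up to a subsequence, $w_i^j\to w_i^\infty$ uniformly on $\overline\Omega$ and weakly in each $W^{1,q}$, with $w_i^\infty\in W_0^{1,\infty}(\Omega)\cap C(\overline\Omega)$, $w_i^\infty\ge0$, and $\|F(\nabla w_i^\infty)\|_{L^\infty(\Omega)}\le L$. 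The normalization gives $\|w_i^j\|_{L^\infty}\ge|\Omega|^{-1/p_j}\to1$, hence $w_i^\infty\not\equiv0$; and since $w_1^j w_2^j\equiv0$ (disjoint supports) one gets $w_1^\infty w_2^\infty\equiv0$, so the open sets $\{w_1^\infty>0\}$ and $\{w_2^\infty>0\}$ are disjoint.

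The core of the proof, and the step I expect to be the main obstacle, is the passage to the limit in the viscosity sense. Mimicking the computation carried out for the first eigenvalue in \cite{bkj}, one should show that, up to a further subsequence, $w_i^\infty$ is a viscosity solution in $\Omega$ of $\mathcal A_{L_i}(u,\nabla u,\nabla^2u)=0$, where $L_i=\lim_j\lambda_1(p_j,\Omega_i^j)^{1/p_j}$, so that $L_2=L$ and $L_1\le L$; the delicate point is precisely that the limiting operator is discontinuous on $\{u=0\}$, so one has to track carefully the sign of $w_i^\infty$ at the touching points of the test functions (this is exactly why $\mathcal A_\Lambda$ was introduced with three branches). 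Granting this, on a connected component $D_i$ of $\{w_i^\infty>0\}$ the function $w_i^\infty$ is a \emph{positive} viscosity solution of \eqref{firstinf} (with $\Omega$ replaced by $D_i$ and $\Lambda=L_i$) vanishing on $\partial D_i$, so the last assertion of Theorem \ref{eqfirst} forces $L_i=\Lambda_1(\infty,D_i)=1/\rho_F(D_i)$. Therefore $D_1$ and $D_2$ are disjoint domains contained in $\Omega$ with $\rho_F(D_1)=1/L_1\ge1/L$ and $\rho_F(D_2)=1/L$, so that for every $\varepsilon>0$ the set $\Omega$ contains two disjoint Wulff shapes of radius $1/L-\varepsilon$; hence $\rho_{2,F}(\Omega)\ge1/L$, i.e. $L\ge1/\rho_{2,F}(\Omega)$. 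Since every subsequential limit satisfies this, $\liminf_p\lambda_2(p,\Omega)^{1/p}\ge1/\rho_{2,F}(\Omega)$, and combined with the upper bound this gives $\lim_p\lambda_2(p,\Omega)^{1/p}=1/\rho_{2,F}(\Omega)=\Lambda_2(\infty,\Omega)$.

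Finally, to check that $\Lambda_2(\infty,\Omega)$ is actually an eigenvalue of $\mathcal Q_\infty$ in the sense of Definition \ref{defeig}, I would run the same compactness and viscosity-limit arguments on the full sequence of $L^{p_j}(\Omega)$–normalized second eigenfunctions $u_{p_j}$: up to a subsequence $u_{p_j}\to u_\infty$ uniformly, with $u_\infty\in W_0^{1,\infty}(\Omega)\cap C(\overline\Omega)$, $u_\infty|_{\partial\Omega}=0$, and $\|u_\infty\|_{L^\infty}\ge1$ so that $u_\infty\not\equiv0$; and, exactly as above, $u_\infty$ is a viscosity solution of $\mathcal A_{L}(u_\infty,\nabla u_\infty,\nabla^2u_\infty)=0$ in $\Omega$ with $L=\Lambda_2(\infty,\Omega)$. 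Thus $u_\infty$ is an eigenfunction for the anisotropic $\infty$-Laplacian associated with $\Lambda_2(\infty,\Omega)$, which completes the proof.
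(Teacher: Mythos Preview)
Your argument is correct, and for the lower bound you take a genuinely different route from the paper. The paper works directly with the uniform limit $u$ of the normalized second eigenfunctions $u_{p_j}$: after showing that $u$ solves $\mathcal A_{\overline\Lambda}=0$ in the viscosity sense on all of $\Omega$ (which is legitimate because each $u_{p_j}$ is a weak eigenfunction on the whole of $\Omega$), it splits into two cases according to whether $u$ changes sign. If it does, the nodal sets $\{u>0\}$ and $\{u<0\}$ furnish, via Theorem~\ref{eqfirst}, the two disjoint inscribed Wulff shapes of radius $1/\overline\Lambda$; if it does not, $\Omega$ must be disconnected and one has to go back and replace the sequence, choosing constant--sign first and second eigenfunctions supported in different connected components. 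Your approach bypasses this dichotomy by invoking Proposition~\ref{keylemma} at every finite $p_j$ to manufacture the pair $\Omega_1^j,\Omega_2^j$ from the start, and then passing the two \emph{first} eigenfunctions $w_1^j,w_2^j$ to the limit. This is cleaner and yields $\rho_F(D_1)\ge 1/L$, $\rho_F(D_2)=1/L$ without case analysis; the price is a slight redundancy, since you then rerun the compactness and viscosity--limit machinery a second time on the genuine second eigenfunctions $u_{p_j}$ to exhibit the $\infty$--eigenfunction.

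One imprecision is worth flagging: your claim that $w_i^\infty$ solves $\mathcal A_{L_i}=0$ in the viscosity sense \emph{in $\Omega$} is not justified by the usual stability argument, because the approximants $w_i^j$ are viscosity solutions of the $p_j$--equation only on the varying subdomains $\Omega_i^j$, not on $\Omega$ (the zero extension is not a weak solution across $\partial\Omega_i^j\cap\Omega$). What you can show, and what you actually use, is that for any compact $K\subset D_i\subset\{w_i^\infty>0\}$ uniform convergence forces $K\subset\Omega_i^j$ for all large $j$, so the viscosity passage to the limit is valid locally on $D_i$. Since Theorem~\ref{eqfirst} is applied only on $D_i$, this suffices; just replace ``in $\Omega$'' by ``in each connected component of $\{w_i^\infty>0\}$''.
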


\begin{proof}

First we observe that $\lambda_{2}(p,\Omega)^{\frac1p}$ is bounded from above with respect to $p$. More precisely  we have
\begin{equation}
\label{limsup}
 \Lambda_1(\infty,\Omega)\le\limsup_{p \to \infty}\lambda_{2}(p,\Omega)^{\frac1p} \le \Lambda_2(\infty,\Omega).
\end{equation}
Indeed if we consider  two disjoint Wulff shapes $\mathcal W_{1}$ and  $\mathcal W_{2}$ of radius $\rho_{2,F}(\Omega)$, clearly $\mathcal W_{1}\cup\mathcal W_{2}\subset\Omega$  and then by monotonicity property (Proposition \ref{monotone}) of $\lambda_{2}(p,\Omega)$ we have
\[
\lambda_{1}(p,\Omega)^{\frac1p} \le \lambda_{2}(p,\Omega)^{\frac1p}  \le \lambda_{2}(p,\mathcal W_{1}\cup\mathcal W_{2})^{\frac1p} =\lambda_{1}(p,\mathcal W_{1})^{\frac1p},
\]
where last equality follows from Remark \ref{Wulff}.
Then passing to the limit as $p\to \infty$ in the right hand side, by Theorem \ref{limprimo} we have \eqref{limsup}. Hence there exists a sequence $p_{j}$ such that $p_{j}\to +\infty$ as $j\to \infty$, and
\begin{equation}
\label{lambdaconfr}
 \frac{1}{\rho_{F}(\Omega)}=\Lambda_1(\infty,\Omega)\le\lim_{j \to \infty}\lambda_{2}(p_{j},\Omega)^{\frac1p_{j}}=\overline{\Lambda}\le  \Lambda_2(\infty,\Omega)= \frac{1}{\rho_{2,F}(\Omega)}.
\end{equation}
In order to conclude the proof we have to show that $\overline{\Lambda}$ is an eigenvalue for $\mathcal Q_{\infty}$ and that $\overline{\Lambda}= \Lambda_2(\infty,\Omega)$.

Let us consider $u_{j}\in W_{0}^{1,p}(\Omega)$ eigenfunction of $\lambda_{2}(p_{j},\Omega)$ such that $\|u_{j}\|_{L^{p_{j}}(\Omega)}=1$.
Then by standard arguments $u_{j}$, converges, up to a subsequence of $p_{j}$, uniformly to a function $u\in W_{0}^{1,\infty}(\Omega)\cap C(\bar\Omega)$. The function $u$ is a viscosity solution of \eqref{viscinfty} with $\Lambda=\bar\Lambda$. Indeed, let $x_{0}\in\Omega$. If $u(x_{0})>0$, being $u$ continuous, it is positive in a sufficiently small ball centered at $x_{0}$. Then it is possible to proceed exactly as in \cite{bkj} in order to obtain that, in the viscosity sense,
\[
\min\{F(\nabla u(x_{0}))-\overline \Lambda u(x_{0}), -\mathcal Q_{\infty}u(x_{0})\}=0.
\]
Similarly, if $u(x_{0})<0$ then
\[
\max\{-F(\nabla u(x_{0}))-\overline \Lambda u(x_{0}), -\mathcal Q_{\infty}u(x_{0})\}=0.
\]
It remains to consider the case $u(x_{0})=0$. We will show that $u$ is a subsolution of \eqref{viscinfty}. 

Let $\varphi$ a $C^{2}(\Omega)$ function such that $u-\varphi$ has a strict maximum point at $x_{0}$. By the definition of $\mathcal A_{\bar \Lambda}$, we have to show that $-\mathcal Q_{\infty}\varphi(x_{0})\le 0$.

For any $j$, let $x_{j}$ be a maximum point of $u_{j}-\varphi$, so that $x_{j}\to x_{0}$ as $j\to \infty$. Such sequence exists by the uniform convergence of $u_{j}$. By \cite[Lemma 2.3]{bkj} $u_{j}$ verifies in the viscosity sense $-\mathcal Q_{p}u_{j}=\lambda_{2}(p_{j},\Omega)|u_{j}|^{p_{j}-2}u_{j}$. Then 
\begin{multline*}
-\mathcal Q_{p} \varphi_{j}(x_{j})=\\[.2cm]
=
-(p_{j}-2)F^{p_{j}-4}(\nabla \varphi(x_{j}))\langle \nabla^{2}\varphi(x_{j})\, J(\nabla \varphi(x_{j})),J(\nabla \varphi(x_{j}))\rangle +\\[.2cm]
\hfill- F^{p_{j}-2}(\nabla \varphi(x_{j}))\nabla^{2}\varphi(x_{j}) \otimes \nabla_{\xi}J(\nabla \varphi(x_{j})) 
=\\[.2cm]
=-(p_{j}-2)F^{p_{j}-4}(\nabla \varphi(x_{j}))\mathcal Q_{\infty}\varphi(x_{j})-F^{p_{j}-2}(\nabla \varphi(x_{j}))\mathcal Q_{2}\varphi(x_{j}) \le \\
\le \lambda_{2}(p_{j},\Omega)|u_{j}(x_{j})|^{p_{j}-2}u_{j}(x_{j})
;
\end{multline*}
here $A\otimes B:=\sum_{i,k}A_{ik}B_{ik}$, for two $n\times n$ matrices $A,B$. If $\nabla \varphi(x_{0})\ne 0$, then dividing the above inequality by $(p_{j}-2)F^{p_{j}-4}(\nabla \varphi)$ we have
\[
-\mathcal Q_{\infty}\varphi(x_{j})\le 
\frac{F^{2}(\nabla\varphi(x_{j}))\mathcal Q_{2}\varphi(x_{j})}{p_{j}-2}+\left(\frac{\lambda_{2}(p_{j},\Omega)^{\frac{1}{p_{j}-4}}|u_{j}(x_{j})|}{F(\nabla \varphi (x_{j}))}\right)^{p_{j}-4} \frac{u_{j}(x_{j})^{3}}{p_{j}-2}=:\ell_{j}.
\]
Passing to the limit as $j\to \infty$, recalling that $\varphi \in C^{2}(\Omega)$, $F\in C^{2}(\R^{n}\setminus \{0\})$, $\lambda_{2}(p_{j},\Omega)^{\frac{1}{p_{j}}}\to \bar \Lambda$, $\nabla \varphi(x_{0})\ne 0$ and $u_{j}(x_{j})\to 0$ we get
\[
-\mathcal Q_{\infty}\varphi(x_{0})\le 0.
\]

Finally, we note that if $\nabla \varphi(x_{0})=0$, the above inequality is trivially true. Hence, we can conclude that $u$ is a viscosity subsolution.

The proof that $u$ is also a viscosity supersolution can be done by repeating the same argument than before, considering $-u$.

Last step of the proof of the Theorem consists in showing that $\bar \Lambda=\Lambda_{2}(\infty,\Omega)$. We distinguish two cases.

\noindent\textbf {Case 1}: The function $u$ changes sign in $\Omega$.

Let us consider the following  sets
\begin{equation*}
\Omega^{+}=\{x \in \Omega \colon u(x)>0\} \qquad \Omega^{-}=\{x \in \Omega \colon u(x)<0\}.
\end{equation*}
Being $u \in C^{0}(\Omega)$ then $\Omega^{+}, \Omega^{-}$ are two disjoint open sets of $\R^{n}$  and $|\Omega^{+}|>0$ and $|\Omega^{-}|>0$. 

By Theorem \ref{eqfirst} we have 
\[
\overline\Lambda =\Lambda_1(\infty,\Omega^+)  \quad \text{ and  } \quad \overline\Lambda =\Lambda_1(\infty,\Omega^-).
\] 
Then by definition of $\rho_{2,F}$ we get 
\[
\rho_{F}(\Omega^{+})=\rho_{F}(\Omega^{-})=\frac{1}{\overline \Lambda} \le \rho_{2,F}(\Omega), 
\]
that implies, by \eqref{lambdaconfr} that 
\[
\overline{\Lambda}= \Lambda_{2}(\infty,\Omega).
\]

\noindent \textbf{Case 2}: The function $u$ does not change sign in $\Omega$. 

We first observe that  in this case $\Omega$ cannot be connected. Indeed  since $u_{j}$ converges to $u$ in $C^{0}(\overline\Omega)$,  for sufficiently large $p$ we have that there exist second eigenfunctions relative to $\lambda_{2}(p,\Omega)$  with constant sign in $\Omega$ and this cannot happen if $\Omega$ is connected. 

Then in this case, we have to replace the sequence $u_{j}$ (and then the function $u$) in order to find two disjoint connected open subsets $\Omega_{1},\Omega_{2}$ of $\Omega$, 
such that
\begin{equation}
\label{1csa}
\Lambda_{1}(\infty,\Omega)=\Lambda_{1}(\infty,\Omega_{1})
\end{equation}
and
\begin{equation}
\label{2csa}
\overline{\Lambda}=\Lambda_{1}(\infty,\Omega_{2}).
\end{equation}

Once we prove that such subsets exist, by \eqref{lambdaconfr} and the definition of $\rho_{2,F}$ we obtain
\[
\rho_{F}(\Omega_{2})=\frac{1}{\overline \Lambda}\le \rho_{2,F}(\Omega)\le \rho_{F}(\Omega)=\rho_{F}(\Omega_{1}),
\]
that implies, again by \eqref{lambdaconfr},
\[
\overline{\Lambda}= \Lambda_{2}(\infty,\Omega).
\]

In order to prove \eqref{1csa} and \eqref{2csa}, we consider $u_{1,\infty}$, an eigenfunction associated to $\Lambda_{1}(\infty,\Omega)$, obtained as limit in $C^{0}(\Omega)$ of a sequence $u_{1,p}$ of first normalized eigenfunctions associated to $\lambda_{1}(p,\Omega)$, and consider a connected component of $\Omega$, say $\Omega_{1}$, where $u_{1,\infty}>0$ and such that $\Lambda_{1}(\infty,\Omega)=\Lambda_{1}(\infty,\Omega_{1})$. The argument of the proof of Proposition \ref{propprimo} gives that such $u_{1,\infty}$ and $\Omega_{1}$ exist. Then, let $u_{2,p}\ge 0$ be a normalized eigenfunction associated to $\lambda_{2}(p,\Omega)$ such that for any $p$ sufficiently large, $\spt(u_{2,p})\cap \Omega_{1}=\emptyset$.

The existence of such a sequence is guaranteed from this three observations:
\begin{itemize}
\item if $u_{2,p}$ changes sign for a divergent sequence of $p$'s, then we come back to the case 1;
\item by the maximum principle, in each connected component of $\Omega$ $u_{2,p}$ is either positive or identically zero;
\item the condition $\spt(u_{2,p})\cap \Omega_{1}=\emptyset$ depends from the fact that $u_{2,p}$ can be chosen not proportional to $u_{1,p}$.
\end{itemize}
Hence, there exists $\Omega_{2}$ connected component of $\Omega$ disjoint from $\Omega_{1}$, such that $u_{2,p}$ converges to $u_{2,\infty}$ (up to a subsequence) in $C^{0}(\Omega_{2})$, and where $u_{2,\infty}>0$. By Theorem \ref{eqfirst}, \eqref{2csa} holds.
\end{proof}

\begin{thm}
Given $\Omega$ bounded open set of $\R^{n}$, let $ \Lambda >\Lambda_{1}(\infty,\Omega)$ be an eigenvalue for $\mathcal Q_{\infty}$. Then  $\Lambda \ge \Lambda_{2}(\infty,\Omega)$ and $\Lambda_2(\infty,\Omega)$ is the second eigenvalue of $\mathcal Q_{\infty}$, in the sense that there are no eigenvalues of $\mathcal Q_{\infty}$ between $\Lambda_1(\infty,\Omega)$ and $\Lambda_2(\infty,\Omega)$.
\end{thm}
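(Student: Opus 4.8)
The plan is to prove the contrapositive‑flavoured statement that every eigenvalue $\Lambda$ of $\mathcal Q_{\infty}$ with $\Lambda>\Lambda_{1}(\infty,\Omega)$ satisfies $\Lambda\ge\Lambda_{2}(\infty,\Omega)$; since $\Lambda_{2}(\infty,\Omega)$ is itself an eigenvalue of $\mathcal Q_{\infty}$ by Theorem~\ref{limsecondo}, this at once yields that $\Lambda_{2}(\infty,\Omega)$ is the second eigenvalue. So I would fix such a $\Lambda$ and an associated eigenfunction $u$ in the sense of Definition~\ref{defeig}: $u\in C(\overline\Omega)$, $u|_{\partial\Omega}=0$, $u\not\equiv0$, and $\mathcal A_{\Lambda}(u,\nabla u,\nabla^{2}u)=0$ in the viscosity sense. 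Recalling that $\Lambda_{2}(\infty,\Omega)=1/\rho_{2,F}(\Omega)$, it suffices to exhibit two disjoint Wulff shapes of radius $1/\Lambda$ contained in $\Omega$, for then $\rho_{2,F}(\Omega)\ge 1/\Lambda$, i.e.\ $\Lambda\ge\Lambda_{2}(\infty,\Omega)$. I would split the argument according to whether $u$ changes sign.

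Suppose first that $u$ changes sign, so that $\Omega^{+}=\{u>0\}$ and $\Omega^{-}=\{u<0\}$ are nonempty disjoint open sets. On a connected component $U^{+}$ of $\Omega^{+}$ the first argument of $\mathcal A_{\Lambda}$ is positive, so by the definition of $\mathcal A_{\Lambda}$ the restriction $u|_{U^{+}}$ is a positive viscosity solution of problem~\eqref{firstinf} on $U^{+}$ with zero boundary values; the last assertion of Theorem~\ref{eqfirst} then forces $\Lambda=\Lambda_{1}(\infty,U^{+})=1/\rho_{F}(U^{+})$. Hence $U^{+}$ contains a Wulff shape of radius $\rho_{F}(U^{+})=1/\Lambda$, which is thus contained in $\Omega^{+}$. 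Running the same argument with $-u$ on a connected component $U^{-}$ of $\Omega^{-}$ — here one uses the $s<0$ branch of $\mathcal A_{\Lambda}$, which, by the parities $F(-\xi)=F(\xi)$ and $J(-\xi)=-J(\xi)$, is equivalent to \eqref{firstinf} written for $-u$ — produces a Wulff shape of radius $1/\Lambda$ contained in $\Omega^{-}$. Since $\Omega^{+}\cap\Omega^{-}=\emptyset$, these two Wulff shapes are disjoint, which concludes this case.

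Suppose now that $u$ has constant sign, say $u\ge0$. By the maximum principle for equations governed by $\mathcal Q_{\infty}$ — the same one invoked in the proof of Proposition~\ref{propprimo}, see \cite[Lemma~3.2]{bb} — $u$ is either strictly positive or identically zero on each connected component of $\Omega$, and since $u\not\equiv0$ there is a component $\Omega_{0}$ on which $u>0$. As above, $u|_{\Omega_{0}}$ is a positive viscosity solution of \eqref{firstinf}, whence $\Lambda=\Lambda_{1}(\infty,\Omega_{0})=1/\rho_{F}(\Omega_{0})$. The hypothesis $\Lambda>\Lambda_{1}(\infty,\Omega)=1/\rho_{F}(\Omega)$ gives $\rho_{F}(\Omega_{0})<\rho_{F}(\Omega)$; in particular $\Omega$ is disconnected and some component $\Omega^{*}\ne\Omega_{0}$ attains $\rho_{F}(\Omega^{*})=\rho_{F}(\Omega)>1/\Lambda$. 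Then $\Omega_{0}$ contains a Wulff shape of radius $1/\Lambda$, $\Omega^{*}$ contains one of radius $>1/\Lambda$ (which may be shrunk to radius $1/\Lambda$), and the two are disjoint since they sit in different components. In either case we have produced two disjoint Wulff shapes of radius $1/\Lambda$ in $\Omega$, so $\Lambda\ge\Lambda_{2}(\infty,\Omega)$; together with Theorem~\ref{limsecondo} this shows that $\Lambda_{2}(\infty,\Omega)$ is an eigenvalue and that no eigenvalue of $\mathcal Q_{\infty}$ lies strictly between $\Lambda_{1}(\infty,\Omega)$ and $\Lambda_{2}(\infty,\Omega)$.

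The main obstacle I anticipate is purely technical bookkeeping around the discontinuity of $\mathcal A_{\Lambda}$ at $s=0$: one must check carefully that localizing to the open sets $\{u>0\}$ and $\{u<0\}$ genuinely turns $\mathcal A_{\Lambda}(u,\nabla u,\nabla^{2}u)=0$ into problem \eqref{firstinf} for $u$ and for $-u$ respectively — in particular that the change of sign interacts correctly with the definition of viscosity sub/supersolution via the parities of $F$ and $J$ — and that the results of \cite{bkj,bb}, stated for bounded domains, are legitimately applied on connected components, where all their hypotheses hold. None of these steps should present a genuine difficulty.
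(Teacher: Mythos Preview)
Your proposal is correct and follows essentially the same approach as the paper: the identical two-case split (sign-changing versus constant-sign eigenfunction), the same use of Theorem~\ref{eqfirst} to identify $\Lambda$ with a first $\infty$-eigenvalue on suitable subdomains, and the same conclusion via the definition of $\rho_{2,F}(\Omega)$. If anything, you are slightly more careful than the paper in restricting to connected components before invoking Theorem~\ref{eqfirst} (which is stated for domains) and in explaining why, in Case~2, the component carrying the eigenfunction must differ from one realizing $\rho_{F}(\Omega)$.
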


\begin{proof}
Let $u_{\Lambda}$ be an eigenfunction corresponding to $\Lambda$. We distinguish two cases.

\noindent\textbf {Case 1}: The function $u_{\Lambda}$ changes sign in $\Omega$.

Let us consider the following  sets
\begin{equation*}
\Omega^{+}=\{x \in \Omega \colon u_{\Lambda}(x)>0\} \qquad \Omega^{-}=\{x \in \Omega \colon u_{\Lambda}(x)<0\}.
\end{equation*}
Being $u_{\Lambda} \in C^{0}(\Omega)$ then $\Omega^{+}, \Omega^{-}$ are two disjoint open sets of $\R^{n}$  and $|\Omega^{+}|>0$ and $|\Omega^{-}|>0$. 

By Theorem \ref{eqfirst} we have 
\[
\Lambda =\Lambda_1(\infty,\Omega^+)  \quad \text{ and  } \quad \Lambda =\Lambda_1(\infty,\Omega^-).
\] 
Then by definition of $\rho_{2,F}$ we get 
\[
\rho_{F}(\Omega^{+})=\rho_{F}(\Omega^{-})=\frac{1}{\Lambda} \le \rho_{2,F}(\Omega), 
\]
that implies, by \eqref{lambdaconfr} that 
\[
\Lambda\ge \Lambda_{2}(\infty,\Omega).
\]

\noindent \textbf{Case 2}: The function $u_{\Lambda}$ does not change sign in $\Omega$.

By Theorem \ref{eqfirst} $\Omega$ cannot be connected being $ \Lambda >\Lambda_{1}(\infty,\Omega)$.

In this case, again by By Theorem \ref{eqfirst} we can find two disjoint connected open subsets $\Omega_{1},\Omega_{2}$ of $\Omega$, 
such that
\begin{equation}
\label{1cs}
\Lambda_{1}(\infty,\Omega)=\Lambda_{1}(\infty,\Omega_{1})
\end{equation}
and
\begin{equation}
\label{2cs}
\Lambda=\Lambda_{1}(\infty,\Omega_{2}).
\end{equation}

Being $ \Lambda >\Lambda_{1}(\infty,\Omega),$   we obtain
\[
\rho_{F}(\Omega_{2})=\frac{1}{\Lambda}<\rho_{F}(\Omega)=\rho_{F}(\Omega_{1}),
\]
that by the definition of $\rho_{2,F}$ implies,
\[
\Lambda\ge \Lambda_{2}(\infty,\Omega).
\]

\end{proof}

\begin{rem}
We observe that if $\Omega$ is a bounded open set and $\widetilde{\mathcal  W}$ is the union of two disjoint Wulff sets with the same measure $|\Omega|/2$, it holds that
\[
\rho_{2,F}(\Omega) \le \rho_{2,F}(\widetilde{\mathcal  W}),
\]
that is, 
\[
\Lambda_{2}(\infty,\Omega)\ge \Lambda_{2}(\infty,\widetilde{\mathcal W}),
\]
that is the Hong-Krahn-Szego inequality for the second eigenvalue of $-\mathcal Q_{\infty}$.
\end{rem}

\section*{Acnowledgements}

This work has been partially supported by the FIRB 2013 project ``Geometrical and qualitative aspects of PDE's'' and by GNAMPA of INdAM.

\small{

}


\begin{thebibliography}{99}

\bibitem[AFLT]{aflt} A.Alvino, V. Ferone, P.L.  Lions, G. Trombetti. \textit{Convex symmetrization and applications}. Ann. Inst. H. Poincar\'e Anal. non lin\'eaire 14 (1997), 275-293.

\bibitem[AFT]{aft98} A.Alvino, V. Ferone, G. Trombetti. \textit{On the properties of some nonlinear eigenvalues}. SIAM J. Math. Anal. 29 (1998), 437-451.

\bibitem[BB]{bb} G. Barles, J. Busca. \textit{Existence and comparison results for fully nonlinear degenerate elliptic equations without zeroth order term}.
{Comm. PDE} 26 (2001): 2323--2337.

\bibitem[BP]{bp}
G. Bellettini, M. Paolini.
\textit{Anisotropic motion by mean curvature in the context of Finsler geometry}. Hokkaido Math. J., 25 (1996): 537-566.

\bibitem[BFK]{bfk} M. Belloni, V. Ferone, B. Kawohl. \textit{Isoperimetric inequalities, Wulff shape and related questions for strongly nonlinear elliptic operators}. Z. angew. Math. Phys. ZAMP 54.5 (2003): 771-783.

\bibitem[BKJ]{bkj} M. Belloni, B. Kawohl,  P. Juutinen. \textit{The $p$-Laplace eigenvalue problem as $p \to \infty$ in a Finsler metric}. J. Eur. Math. Soc. 8 (2006): 123-138.

\bibitem[BF2]{bf2} L.Brasco, G. Franzina. \textit{On the Hong-Krahn-Szego inequality for the $p$-Laplace operator}. Manuscripta Math., 141.3-4 (2013): 537-557.

\bibitem[BGM]{bgm} G. Buttazzo, S. Guarino Lo Bianco, M. Marini. \textit{Sharp estimates for the anisotropic torsional rigidity and the principal frequency}. Preprint.

\bibitem[CM]{cm07}
G.Crasta, A. Malusa,
\textit{The distance function from the boundary in a Minkowski space},
{Trans. Amer. Math. Soc.}, {359} (2007), 5725--5759.

\bibitem[CDG1]{CDG1} M. Cuesta, D. De Figueiredo, J-P. Gossez. \textit{The beginning of the Fu\v cik spectrum for the $p$-Laplacian.} J. Diff. Eq. 159.1 (1999): 212-238.

\bibitem[CDG2]{CDG2} M. Cuesta, D. G. De Figueiredo, Jean-Pierre Gossez. \textit{A nodal domain property for the $p$-Laplacian}. Comptes Rendus  Acad. Sc.-Ser. I-Math. 330 (2000): 669-673.

\bibitem[CT]{CT} M. Cuesta, P. Takac. \textit{A strong comparison principle for positive solutions of degenerate elliptic equations}. Diff. Int. Eq. 13 (2000): 721-746.

\bibitem[DG]{dgans} F. Della Pietra, N. Gavitone \textit{Symmetrization for Neumann anisotropic problems and related questions}. Adv. Nonlinear
Stud. 12(2), 219?235 (2012).

\bibitem[DG2]{dpgpota} F. Della Pietra, N. Gavitone. \textit{Faber-Krahn inequality for anisotropic eigenvalue problems with Robin boundary conditions}. Potential Anal. 41(2014):1147-1166.

\bibitem[DG3]{dpgmana} F. Della Pietra, N. Gavitone. \textit{Sharp bounds for the first eigenvalue and the torsional rigidity related to some anisotropic operators}. Math. Nachr. 287 (2014): 194-209.

\bibitem[DGP]{dgp2} F. Della Pietra, N. Gavitone, G. Piscitelli, \textit{A sharp weighted anisotropic Poincar\'e inequality for convex domains}, preprint.

\bibitem[ET]{etpolya}
L.~Esposito and C.~Trombetti.
\textit{Convex symmetrization and {P}\'olya-{S}zeg\"o inequality}.
 {Nonlinear Anal. TMA},
  56 (2004):43--62.

\bibitem[FV]{fvpolya}
A.~Ferone, R.~Volpicelli.
\textit{Convex rearrangement: equality cases in the {P}\'olya-{S}zeg\"o
  inequality}.
Calc. Var. PDE, 21 (2004):259-272.

\bibitem[F]{thfr} G. Franzina. \textit{Existence, uniqueness, optimization and stability for low eigenvalues of some nonlinear operators}. PhD thesis.

\bibitem[JL]{jl} P. Juutinen, P. Lindqvist. \textit{On the higher eigenvalues for the $\infty$-eigenvalue problem}. Calc. Var. 23 (2005): 169-192.

\bibitem[JLM]{jlm} P. Juutinen, P. Lindqvist and J. Manfredi. \textit{The $\infty$-eigenvalue problem.} Arch. Rat. Mech. Anal. 148 (1999): 89-105.

\bibitem[KN]{kn08}
B. Kawohl, M. Novaga.
\textit{{The $p$-{L}aplace eigenvalue problem as $p\rightarrow 1$ and
  {C}heeger sets in a {F}insler metric}}.
{J. Convex. Anal.} 15 (2008):623-634.

\bibitem[LU]{LU} O. Ladyzhenskaya, N. Ural'tseva. \textit{Linear and quasilinear elliptic equations}, Academic Press, New York, 1968.

\bibitem[L]{li0}
P. Lindqvist,
   \textit{On the equation $\textrm{div}\,(|\nabla u|^{p-2}\nabla
              u)+\lambda|u|^{p-2}u=0$}.
  Proc. Amer. Math. Soc. 109 (1990), 157-164.

\bibitem[L1]{lpota} P. Lindqvist. \textit{On nonlinear Rayleigh quotient}. Potential Anal. 2 (1993): 199-218.

\bibitem[L2]{L2} P. Lindqvist. \textit{A nonlinear eigenvalue problem}. Topics in mathematical analysis 3 (2008): 175-203.

\bibitem[P]{p} G. Piscitelli. \textit{A nonlocal anisotropic eigenvalue problem.} Diff. Int. Eq. 29 (2016), 1001-1020.

\bibitem[WX]{wx2} G. Wang, C. Xia. \textit{An optimal anisotropic Poincar\'e inequality for convex domains}. Pacific J. Math. 258 (2012):305-326

\end{thebibliography}
\end{document}